\numberwithin{equation}{section}
\newcommand{\R}{\mathbb{R}}
\newcommand{\T}{\mathbb{T}}
\newcommand{\N}{\mathbb{N}}
\newcommand{\abs}[1]{\left| #1\right|}
\newcommand{\set}[1]{\left\{#1\right\}}
\newtheorem{theorem}{Theorem}[section]
\newtheorem{proposition}[theorem]{Proposition}
\theoremstyle{definition}
\title[Focusing Singularity in a Derivative NLS Equation]
{Focusing Singularity in a Derivative Nonlinear Schr\"odinger Equation}
\author[Liu]{Xiao Liu}
\email{liuxiao@math.toronto.edu}
\address{Department of Mathematics, University of Toronto, 40
  St. George St., Room 6290, Toronto, Ontario M5S 2E4, Canada}
\author[Simpson]{Gideon Simpson}
\email{gsimpson@umn.edu}
\address{School of Mathematics, University of Minnesota, 206 Church St.
  SE, 127 Vincent Hall, Minneapolis, MN 55455, USA}
 \thanks{G.S. was supported by NSERC.  His contribution to this work
   was completed under the NSF PIRE grant OISE-0967140 and the DOE grant DE-SC0002085.}
\author[Sulem]{Catherine Sulem}
\email{sulem@math.toronto.edu}
\address{Department of Mathematics, University of Toronto, 40
  St. George St., Room 6290, Toronto, Ontario M5S 2E4, Canada}
 \thanks{C.S. is  partially  supported by NSERC through grant number 46179-11.}
\subjclass{35Q55, 37K40, 35Q51, 65M60}
\keywords{Derivative Nonlinear Schr\"odinger Equation, Singular Solutions,
Rate of Blow-up, Dynamical Rescaling}
\date{\today}
\begin{document}

\maketitle

\begin{abstract}
  We present a numerical study of a derivative nonlinear Schr\"odinger
  equation with a general power nonlinearity,
  $\abs{\psi}^{2\sigma}\psi_x$.  In the $L^2$-supercritical regime,
  $\sigma>1$, our simulations indicate that there is a finite time
  singularity. We obtain a precise description of the local structure
  of the solution in terms of blowup rate and asymptotic profile, in a
  form similar to that of the nonlinear Schr\"odinger equation with
  supercritical power law nonlinearity.

\end{abstract}

\section{Introduction}
We consider the derivative nonlinear Schr\"{o}dinger (DNLS) equation
\begin{equation}
  \label{eqn:DNLS}
  i \partial_t \Psi+ \Psi_{xx}+i(|\Psi|^{2} \Psi)_x=0,\quad x\in \mathbb{R}, \quad t\in \mathbb{R}
\end{equation}
with initial condition $\Psi(x,0) = \Psi_0(x)$.  Under a long
wavelength approximation, this nonlinear dispersive wave equation is a
model for Alfv{\'e}n waves in plasma physics,
\cite{MJOLHUS1976,Passot1993,Sulem1999}.  Using a Gauge
transformation,
\begin{equation}
  \psi=\Psi(x)\exp\left\{\frac{1}{2}\int_{-\infty}^x|\Psi(\eta)|^2d\eta\right\},
\end{equation}
\eqref{eqn:DNLS} takes the form
\begin{equation}
  \label{eqn:DNLS2}
  i \partial_t \psi+ \psi_{xx}+i|\psi|^{2}\psi_x=0,\quad x\in\mathbb{R}, \quad  t\in \mathbb{R}.
\end{equation}
This equation appeared in studies of ultrashort optical pulses,
\cite{Agrawal2006,Moses2007}. The latter equation admits a Hamiltonian
form
\begin{equation}
  \label{eqn:Ham}
  \partial_t \psi = - i\frac{\delta E}{\delta \bar\psi}
\end{equation}
with the Hamiltonian
\begin{equation}
  \label{eqn:Energy_dnls}
  E=\frac{1}{2}\int_{-\infty}^\infty|\psi_x|^2
  dx+\frac{1}{4}\Im\int_{-\infty}^\infty |\psi|^{2}\bar{\psi}\psi_x
  dx.
\end{equation}

The well-posedness of \eqref{eqn:DNLS} has been studied by many
authors.  One of the earliest results is due to Tsutsumi and Fukuda,
who proved local well-posedness on both $\R$ and $\T$ in the Sobolev
space $H^s$, provided $s > 3/2$ and the data is sufficiently small,
\cite{Tsutsumi1980}.  This was subsequently refined by Hayashi and
Ozawa, who found that for initial conditions satisfying
\begin{equation}
  \label{e:smallness}
  \|u_0\|_{L^2}<\sqrt{2\pi},
\end{equation}
the solution was global in $H^s$ for $s\in \N$, \cite{Hayashi1992}.
More recently, the global in time result for data satisfying
\eqref{e:smallness} was extended to all $H^s$ spaces with $s > 1/2$,
\cite{Colliander2002}.

Local well posedness has also been studied with additive terms,
\cite{Difranco2008}, and with more general nonlinearities,
\cite{Kenig:1998wr,Linares2009}.  However, an outstanding problem for
DNLS is to determine the fate of large data, violating
\eqref{e:smallness}.  At present, there is neither a result on global
well posedness, nor is there a known finite time singularity.

In this paper, we consider a generalized derivative nonlinear
Schr\"{o}dinger (gDNLS) equation of the form ($\sigma\geq 1$)
\begin{equation}
  \label{eqn:gDNLS}
  i \partial_t\psi+\psi_{xx}+i|\psi|^{2\sigma}\psi_x=0, \quad x\in \mathbb{R}, \quad t\in \mathbb{R}.
\end{equation}
It also has a Hamiltonian structure with generalized energy:
\begin{equation}
  \label{eqn:Energy}
  E=\frac{1}{2}\int_{-\infty}^\infty|\psi_x|^2
  dx+\frac{1}{2(\sigma+1)}\Im\int_{-\infty}^\infty
  |\psi|^{2\sigma}\bar{\psi}\psi_x dx, 
\end{equation}
along with the mass and momentum invariants:
\begin{align}
  \label{eqn:Mass_dnls}
  M&=\frac{1}{2}\int_{-\infty}^\infty|\psi|^2 dx,
  \\
  \label{eqn:Momentum_dnls}
  P&=-\frac{1}{2}\Im\int_{-\infty}^\infty\bar{\psi}\psi_x dx.
\end{align}
For $\sigma> 5/2$, the local well-posedness for \eqref{eqn:gDNLS} is
proved for initial data in $H^{1/2}$ intersected with an appropriate
Strichartz space, \cite{Hao2007}.

For all values of $\sigma$, we have the scaling property that if
$\psi(x,t)$ is a solution of \eqref{eqn:gDNLS}, then so is
\begin{equation}
  \label{eqn:scaling_prop}
  \psi_{\lambda}(x,t)=\lambda^{-\frac{1}{2\sigma}} \psi\left(\lambda^{-1}{x},\lambda^{-2}{t}\right).
\end{equation}
Hence, \eqref{eqn:gDNLS} is $L^2$-critical for $\sigma = 1$ and
$L^2$-supercritical for $\sigma>1$.  It is well known that the
$L^2$-supercritical NLS equation with power law nonlinearity has
finite time singularities for sufficiently large data.  The goal of
this work is to explore the potential for collapse in
\eqref{eqn:gDNLS} when $\sigma >1$.

Our simulations and asymptotics indicate that there is collapse, and
we give an accurate description of the nature of the focusing
singularity with generic ``large'' data.  This is done using the
dynamic rescaling method, which was introduced to study the local
structure of NLS singularities,\cite{McL1986,Sulem1999}. The idea
behind dynamic rescaling is to introduce an adaptive grid through a
nonlinear change of variables based on the scaling invariance of the
equation.  Up to a rescaling, we observe numerically for $\sigma>1$,
the solution blows up locally like
\begin{equation}
  \label{e:universal_blowup}
  \small
  \psi(x,t) \sim \left(\frac{1}{2\alpha(t^{*}-t)}\right)^{\frac{1}{4\sigma}}
  Q\left(\frac{x-x^*}{\sqrt{2\alpha(t^{*}-t)}}+\frac{\beta}{\alpha}\right)e^{i(\theta+
    \frac{1}{2\alpha}\ln\frac{t^{*}}{t^{*}-t})},
\end{equation}
where $t^*$ is the singularity time and $x^*$ is the position of
$\max_x{|\psi(x,t^*)|}$.  The function $Q(\xi)$ is a complex-valued
solution of the equation
\begin{equation}
  \label{e:blowup_soln}
  Q_{\xi\xi} - Q +i\alpha(\tfrac{1}{2\sigma}Q +\xi Q_\xi)-i\beta Q_{\xi}+ i|Q|^{2\sigma}Q_\xi = 0,
\end{equation}
with amplitude that decays monotonically as $\xi \to \pm \infty$. The
coefficients $\alpha$ and $\beta$ are real numbers, and our
simulations show that they depend on $\sigma$ but not on the initial
condition.  We also observe that $\alpha$ decreases monotonically as
$\sigma\to 1$, while $\beta$ first decreases then increases (Figure
\ref{fig:bvp_a_b}).  This observed blowup is analogous to that of NLS
with supercritical power nonlinearity in terms of the blowup speed and
asymptotic profile, \cite{McL1986}.

Our paper is organized as follows.  In section \ref{s:sigma=2}, we
consider gDNLS with quintic nonlinearity ($\sigma=2$). We first
present a direct numerical simulation that suggests there is, indeed,
a finite time singularity.  We then refine our study by introducing
the dynamic rescaling method.  Section \ref{s:generalsigma} shows our
results for $\sigma\in(1,2)$. In the Section \ref{s:profile}, we
discuss the asymptotic profile of gDNLS.  We discuss our results in
Section \ref{s:discussion}, and their implications for blowup in DNLS.
Details of our numerical methods are described in the Appendix.

\section{gDNLS with Quintic Nonlinearity}
\label{s:sigma=2}

We first performed a direct numerical integration of the the gDNLS
equation with quintic nonlinearity ($\sigma = 2$) and initial
condition $\psi_0 = 3 e ^{-2x^2}$. We integrated the equation using a
pseudo-spectral method with an exponential time-differencing
fourth-order Runge-Kutta algorithm (ETDRK4), augmenting it with the
numerical stabilization scheme presented in \cite{Kassam2005} to
better resolve small wave numbers. The computation was performed on
the interval $[-8,8)$, with $2^{12}$ grid points and a time step on
the order of $10^{-6}$.  Figure \ref{fig:u_without_dyn_res} shows that
the wave first moves rightward and then begins to separate. Gradually,
the leading edge of one wave sharpens and grows in time.  The norms
$\|\psi_x(\cdot ,t)\|_{L^2}$ and $\|\psi_{xx}(\cdot ,t)\|_{L^2}$ grow
substantially during the lifetime of the simulation.  The norm
$\|\psi_x(\cdot ,t)\|_{L^2}$ increases from 4 to about 18 while
$\|\psi_{xx}(\cdot,t)\|_{L^2}$ increases from about 10 to 1720.  This
growth in norms is a first indication of collapse.  In contrast, the
$L^\infty$ norm has only increased from 3 to 4.07. As a measure of the
precision of the simulation, the energy remains equal to $ 7.976$,
with three digits of precision, up to time $t=0.009$.

\begin{figure}
  \includegraphics[width=\linewidth]{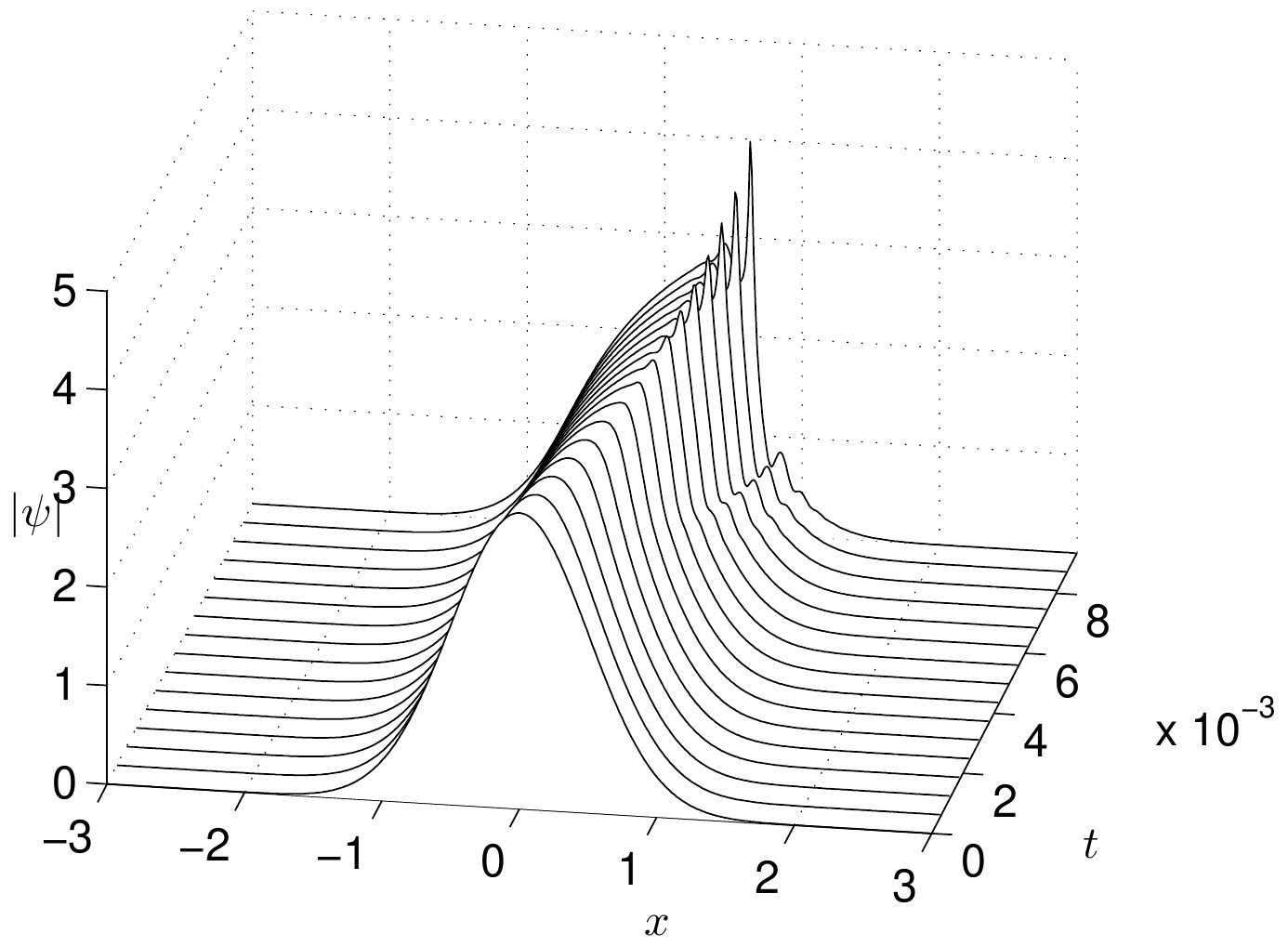}

  \caption{$|\psi(x,t)|$ versus $x$ and $t$.}
  \label{fig:u_without_dyn_res}
\end{figure}

\subsection{Dynamical Rescaling Formulation}

To gain additional detail of the blowup, we employ the dynamic
rescaling method.  Based on the scaling invariance of the equation
\eqref{eqn:scaling_prop}, we define the new variables:
\begin{equation}
  \label{eqn:u_psi}
  \psi(x,t) = L^{-\frac{1}{2\sigma}}(t) u(\xi,\tau),\quad \xi=\frac{x - x_0(t)}{L(t)}\quad \mbox{and} \quad \tau =\int_0^t\frac{ds}{L^2(s)},
\end{equation}
where $L(t)$ is a length scale parameter chosen such that a certain
norm of the solution remains bounded for all $\tau$.  The parameter
$x_0$ will be chosen to follow the transport of the solution.
Substituting the change of variables into \eqref{eqn:gDNLS} gives
\begin{equation}
  \label{eqn:dyn_res_DNLS}
  \begin{split}
    &iu_\tau + u_{\xi\xi} + i a(\tau)(\tfrac{1}{2\sigma}u+\xi u_\xi) -i b(\tau) u_\xi + i|u|^{2\sigma} u_\xi = 0,\\
    &u(\xi,0)=\psi(x,0),
  \end{split}
\end{equation}
where
\begin{subequations}
  \label{eqn:ab}
  \begin{equation}
    \label{eqn:a_b}
    a(\tau) = -L(t)\dot{L}(t) = -\frac{d \ln L}{d \tau},\\
  \end{equation}
  \begin{equation}
    \label{eqn:a_b2}
    b(\tau)  = L(t) \frac{d x_0}{d t}.
  \end{equation}
\end{subequations}
There are several possibilities for the choice of $L(t)$.  We define
$L(t)$ such that $\| u_\xi(\cdot,\tau)\|_{L^2}$ remains constant.
From \eqref{eqn:u_psi},
\[
\| \psi_{x} \|_{L^{2p}} =L^{-\frac{1}{q}} \| u_{\xi} \|_{L^{2p}},
\quad q =
\left(1+\frac{1}{2}\left(\frac{1}{\sigma}-\frac{1}{p}\right)\right)^{-1},
\]
and
\begin{equation*}
  L(t)=\|u_{\xi}(\cdot,0)\|_{L^{2p}}^q \|\psi_{x}(\cdot,t) \|_{L^{2p}}^{-q},
\end{equation*}
leading to an integral expression for the function $a(\tau)$ defined
in \eqref{eqn:a_b}:
\begin{equation}
  \label{eqn:a}
  a(\tau) = -q \| u_{\xi}(\cdot,0) \|_{L^{2p}}^{-2p} \int \Re\left\{(\bar{u}_{\xi}^p u_{\xi}^{p-1})_{\xi}(i u_{\xi\xi} - |u|^{2\sigma}u_\xi) \right\} d\xi,
\end{equation}
Ideally, we would like to choose $x_0(t)$ to follow the highest
maximum of the amplitude of the solution.  After several attempts, we
found that choosing
\begin{equation}
  x_0=\frac{\int x |\psi_x|^{2p} dx}{\int |\psi_x|^{2p} dx}
\end{equation}
follows the maximum of the amplitude in a satisfactory way.  The
coefficient $b$ takes the form
\begin{equation} \label{def-b} b(\tau) = 2p\frac{\int \Re\{ (\xi
    |u_{\xi\xi}|^{2p-2} \bar{u}_{\xi\xi})_{\xi\xi} (i u_{\xi\xi} -
    |u|^{2\sigma} u_{\xi})\} d\xi}{\int |u_{\xi\xi}|^{2p} d\xi}.
\end{equation}
In summary, we now have a system of evolution equations for the
rescaled solution $u$:
\begin{subequations}
  \label{eqn:dyn_res_DNLS_system_u_a_b}
  \begin{align}
    u_\tau &= i u_{\xi\xi} - a(\tau)(\tfrac{1}{2\sigma}u+\xi u_\xi) +
    b(\tau)u_\xi - |u|^{2\sigma} u_\xi \\
    a(\tau)&= -q\| u_{\xi}(0) \|_{L^{2p}}^{-2p} \int
    \Re\left\{(\bar{u}_{\xi}^p u_{\xi}^{p-1})_{\xi}(i
      u_{\xi\xi} - |u|^{2\sigma}u_\xi)\right\}d\xi,\\
    b(\tau) &= 2p\frac{\int \Re\{ (\xi |u_{\xi\xi}|^{2p-2}
      \bar{u}_{\xi\xi})_{\xi\xi} (i u_{\xi\xi} - |u|^{2\sigma}
      u_{\xi})\} d\xi}{\int |u_{\xi\xi}|^{2p} d\xi}.
  \end{align}
\end{subequations}
where $p\in\mathbb{N}$ and $q =
(1+\frac{1}{2}(\frac{1}{\sigma}-\frac{1}{p}))^{-1}$.  The scaling
factor $L(\tau)$ is computed by integration of \eqref{eqn:a_b}.

We expect $u(\xi,\tau)$ to be defined for all $\tau$ and that
$L(\tau)\to 0 $ fast enough so that $\tau \to \infty$ as $t \to
t^*$. The behavior of $a(\tau)$ and $u(\xi,\tau)$ for large $\tau$
will give us information on the scaling factor $L(\tau)$ and the
limiting profile. Under this rescaling, the invariants
\eqref{eqn:Energy}, \eqref{eqn:Mass_dnls} and
\eqref{eqn:Momentum_dnls} become
\begin{subequations}
  \label{eqn:invariants_dyn_res}
  \begin{align}
    \label{eqn:Energy_dyn_res}
    E(\psi(t))&=E(\psi(0)) = L(t)^{-\frac{1}{\sigma}-1} E(u(\tau)),\\
    \label{eqn:Mass_dyn_res}
    M(\psi(t)) &= M(\psi(0)) = L(t)^{-\frac{1}{\sigma}+1} M(u(\tau)),\\
    \label{eqn:Momentum_dyn_res}
    P(\psi(t)) &= P(\psi(0)) = L(t)^{-\frac{1}{\sigma}}P(u(\tau)).
  \end{align}
\end{subequations}
These relations allow us to compute $L(t)$ in different ways and check
the consistency of our calculations.

\subsection{Singularity Formation}
\label{ss:dyn_res_s=2}

Integrating \eqref{eqn:dyn_res_DNLS} using the ETDRK4 algorithm, we
present our results for two families of initial conditions:
\begin{equation}
  \label{gauss}
  \begin{split}
    \psi_0(x) = A_0 e^{-2x^2} \quad\text(Gaussian)
  \end{split}
\end{equation}
and
\begin{equation} \label{lor} \psi_0(x) = \frac{A_0}{1+9x^2}
  \quad\text(Lorentzian).
\end{equation}
with various values of the amplitude coefficient $A_0$.  We observed
that for both families, the corresponding solutions present similar
local structure near the blowup time.

In these simulations, there are typically $2^{18}$ points in the
domain $[-l, l)$ with $l=1024$.  While the exponential time
differencing removes the stiffness associated with the second
derivative term, the advective coefficient,
\[
(a \xi - b + \abs{u}^{2\sigma})u_{\xi},
\]
constrains our time step through a CFL condition.  Near the origin,
where $u$, initially, can have an amplitude as large as $A_0=4$, we
have a coefficient for the quintic case that can be $\sim 100$.  Far
from the origin, $a\xi$ is the dominant term, and this coefficient can
be $\sim 1000$.  Consequently, the time step must be at least two to
three orders of magnitude smaller than the grid spacing.  For the
indicated spatial resolution, $\Delta \xi \sim 10^{-2}$, we found it
was necessary to take $\Delta \tau\sim 10^{-7}$ to ensure numerical
stability.

Figure \ref{fig:A=3_water_fall} shows the evolution of $|u(\xi,\tau)|$
with the Gaussian initial condition, $A_0=3$ and $L(0)= 0.2$.  The
rescaled wave $u(\xi,\tau)$ separates into three pieces.  The middle
one stays in the center of the domain while the other two waves move
away from the origin on each side.  Figure
\ref{fig:A=3_H1_H2_amplitude} shows that $\max_\xi |u(\xi,\tau)|$
slightly decreases with $\tau$. Due to our choice of the rescaling
factor, $\|u_\xi(\cdot,\tau)\|_{L^2}$ is constant. We also observe
that $\|u_{\xi\xi}(\cdot,\tau)\|_{L^2}$ remains bounded.  Returning to
the primitive variables, the maximum integration time $\tau=6$
corresponds to $t=0.0098$, $\|\psi_x\|_{L^2}=334.58$ and
$\|\psi_{xx}\|_{L^2}=1.56\times10^{6}$.  For the Lorentzian initial
condition \eqref{lor} with $A_0=3$ and $L(0) = 1$, the maximum
integration time $\tau=0.16$ corresponds to $t=0.008$,
$\|\psi_x\|_{L^2}=109.8$ and $\|\psi_{xx}\|_{L^2}=1.17\times10^5$.

\begin{figure}[h]
  \begin{center}
    \includegraphics[width=0.75\linewidth]{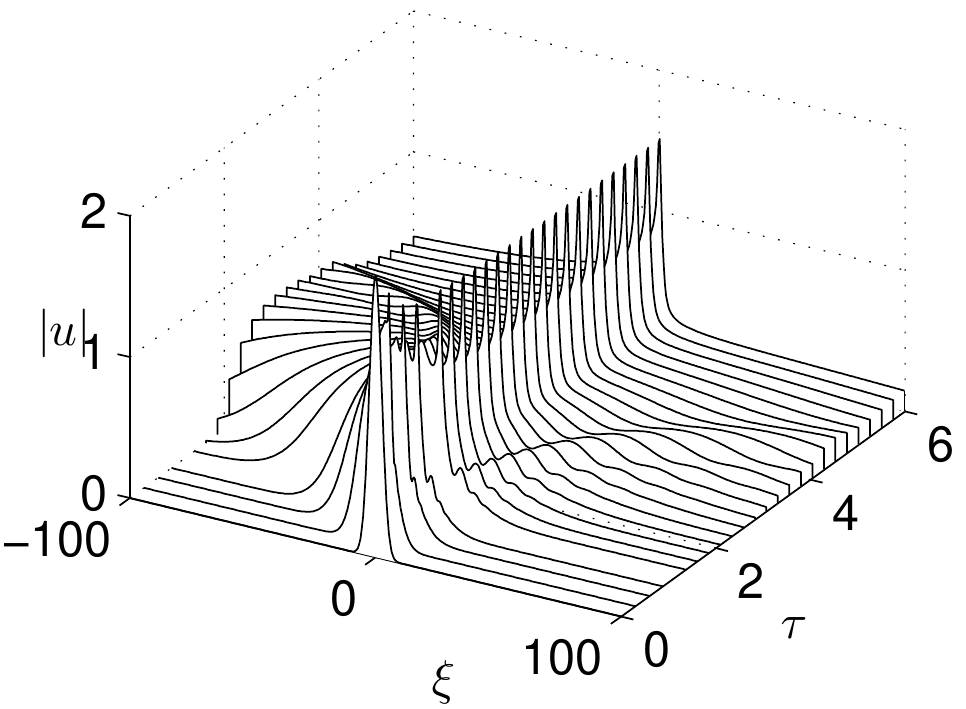}
    \caption{ $|u|$ versus $\xi$ and $\tau$, for initial condition
      \eqref{gauss} with $A_0 =3$, and nonlinearity $\sigma=2$.}
    \label{fig:A=3_water_fall}
  \end{center}
\end{figure}

\begin{figure}[h]
  \begin{center}
    \includegraphics[width=0.75\linewidth]{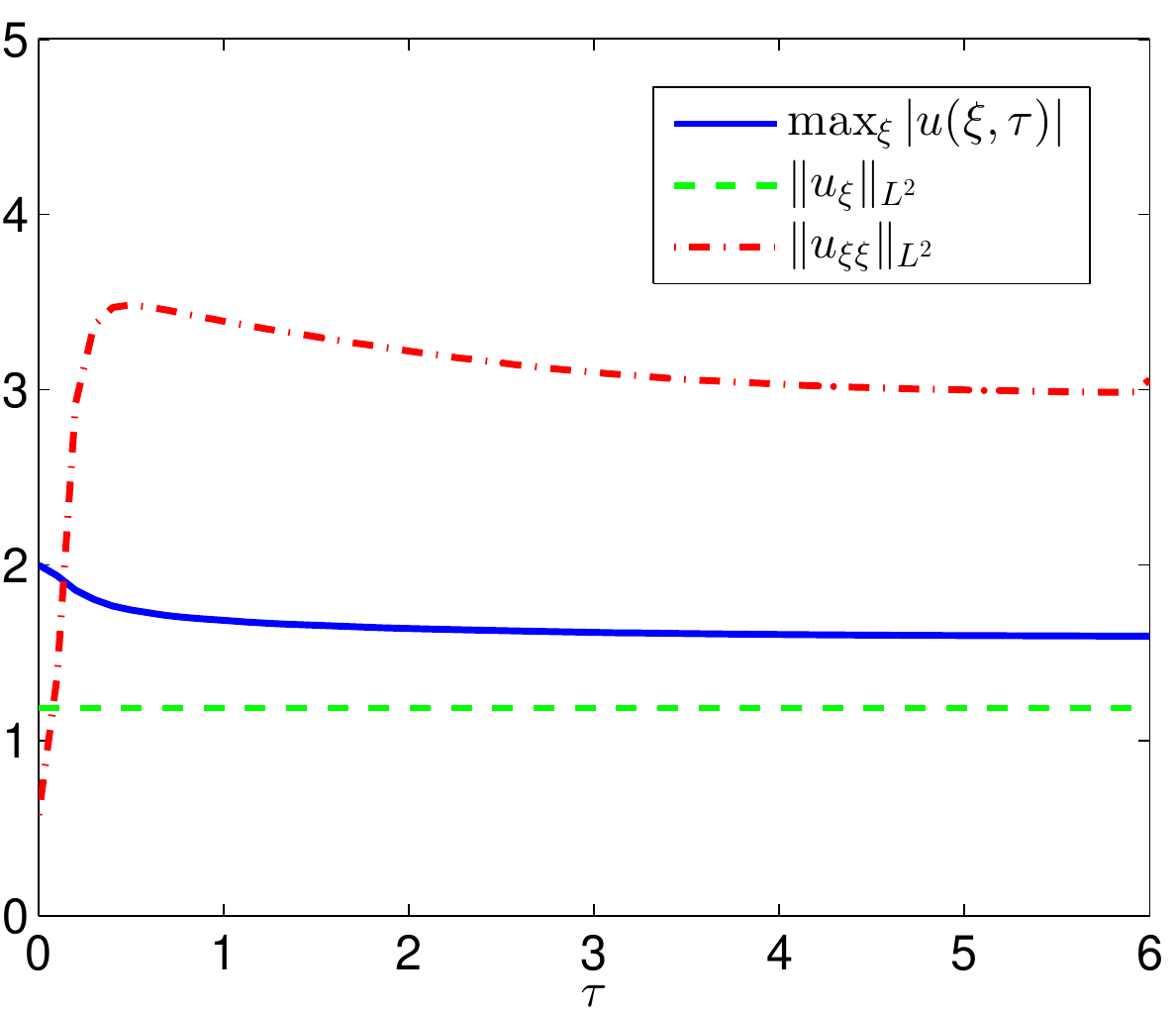}
    \caption{ $\max_\xi{|u(\xi,\tau)|}$,
      $\|u_\xi(\cdot,\tau)\|_{L^2}$, and
      $\|u_{\xi\xi}(\cdot,\tau)\|_{L^2}$ versus $\tau$, same initial
      conditions as in Fig.  \eqref{fig:A=3_water_fall}.  }
    \label{fig:A=3_H1_H2_amplitude}
  \end{center}
\end{figure}

For a detailed description of singularity formation, we turn to the
evolution of the parameters $a$ and $b$ as functions of $\tau$.  For
both families of initial conditions \eqref{gauss} and \eqref{lor} and
amplitudes $A_0=2,3,4$, we observe that $a$ and $b$ tend to constants
$A$ and $B$ as $\tau$ gets large.  Figures \ref{fig:A=3_a} and
\ref{fig:A=3_b} respectively display $a$ and $b$ versus $\tau$ for
initial conditions \eqref{gauss} and \eqref{lor} and $A_0=3$.

\begin{figure}
  \begin{center}
    \includegraphics[width=0.45\linewidth]{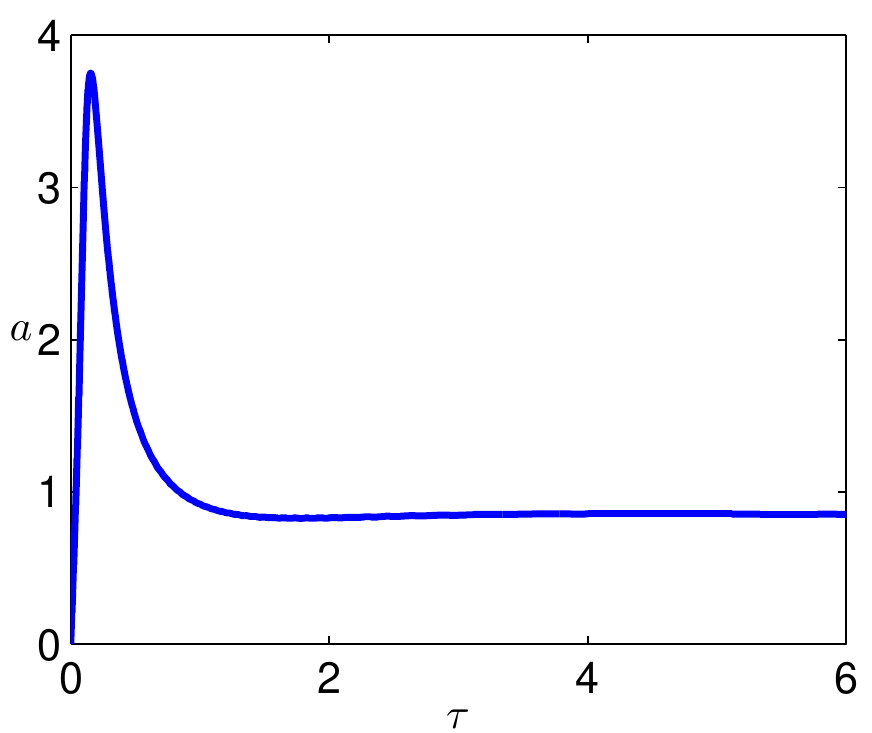}
    \includegraphics[width=0.45\linewidth]{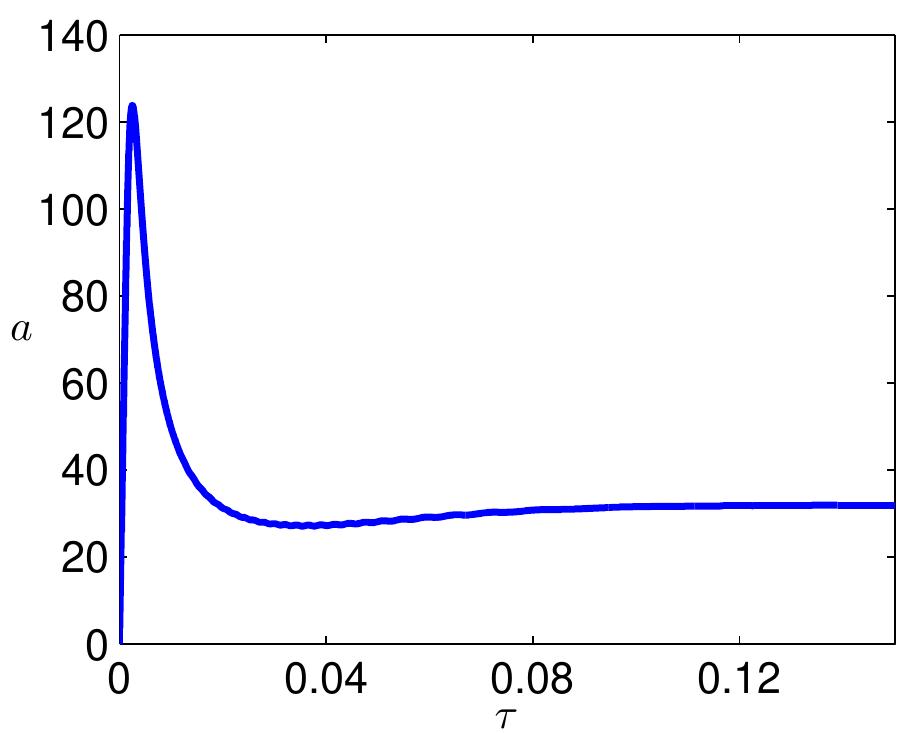}
    \caption{ Time evolution of $a(\tau)$ , with initial condition
      \eqref{gauss} (left) and \eqref{lor} (right), $A_0 =3$.}
    \label{fig:A=3_a}
  \end{center}
\end{figure}

\begin{figure}
  \begin{center}
    \includegraphics[width=0.45\linewidth]{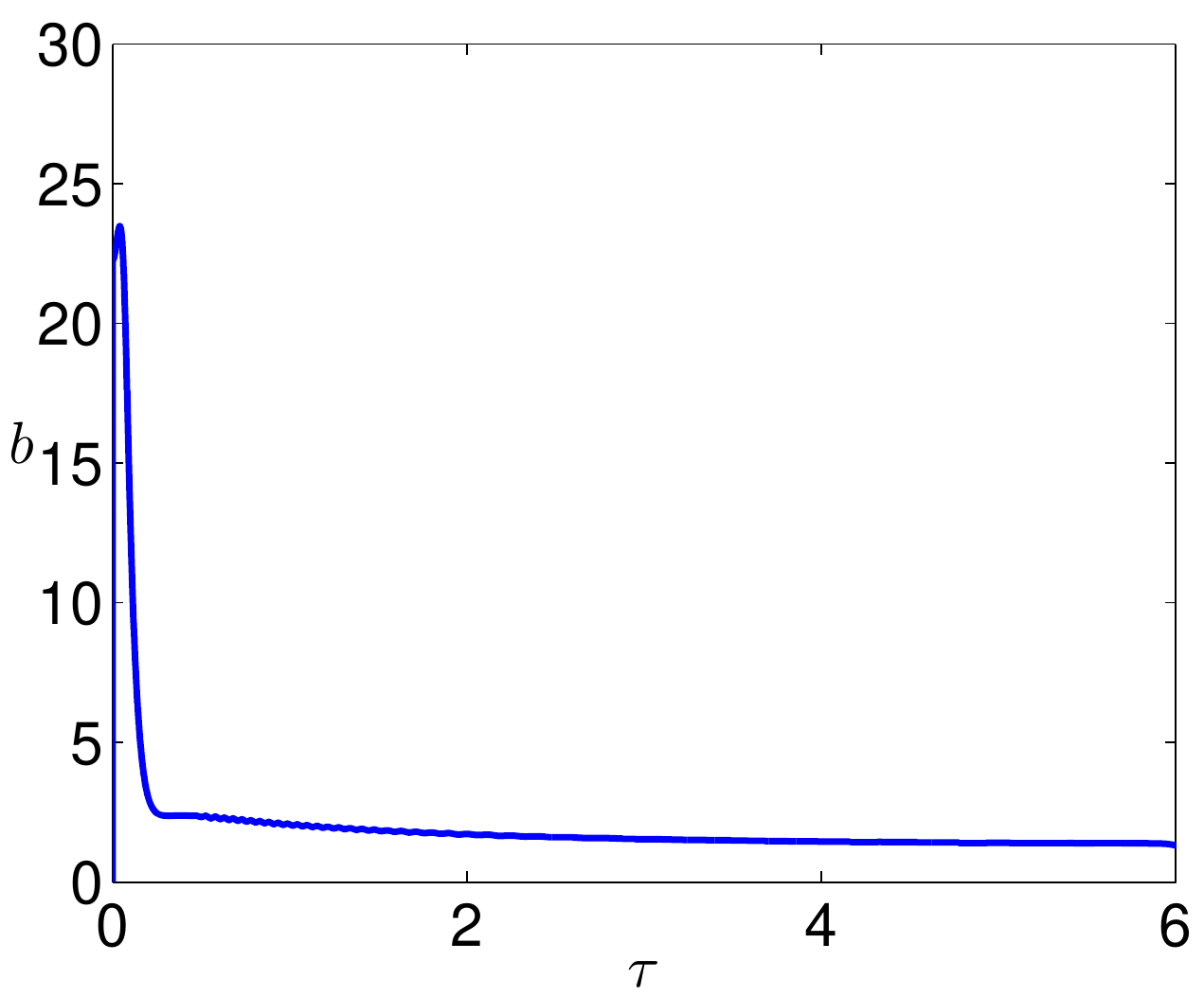}
    \includegraphics[width=0.47\linewidth]{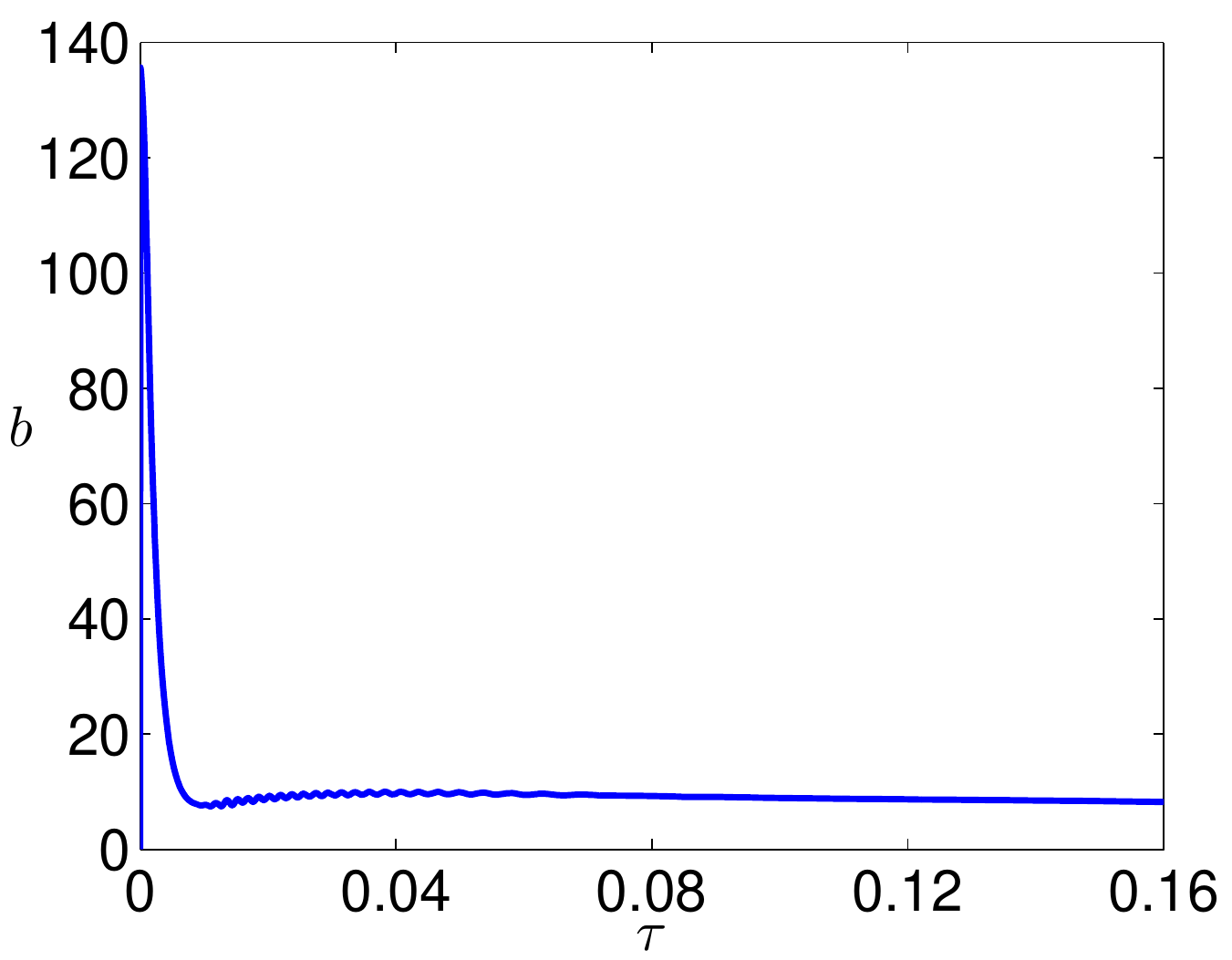}
    \caption{ Time evolution of $b(\tau)$ , with initial condition
      \eqref{gauss} (left) and \eqref{lor} (right), $A_0 =3$.}
    \label{fig:A=3_b}
  \end{center}
\end{figure}

Turning to the limiting profile of the solution, we write $u$ in terms
of amplitude and phase in the form $u \equiv |u|e^{i\phi(\xi,\tau)}$,
Figure \ref{fig:A=3_abs_u} shows that $|u|$ tends to a fixed profile
as $\tau$ increases. Moreover, as shown in Figure
\ref{fig:A=3_phase_xi=0}, the phase at the origin is linear for $\tau$
large enough, namely $\phi(0,\tau) \sim C \tau$. The constant $C$ is
obtained by fitting the phase at the origin using linear least squares
over the interval $[\tau_{max}/2,\tau_{max}]$ After extracting this
linear phase, the rescaled solution $u$ tends to a time-independent
function, (Figure \ref{fig:A=3_phase}):
\begin{equation}
  \label{eqn:u_1}
  u(\xi,\tau) \sim S(\xi)e^{i(\theta+ C\tau) }\quad\text{as}\quad
  \tau\rightarrow\infty.
\end{equation}

\begin{figure}
  \begin{center}
    \includegraphics[width=0.45\linewidth]{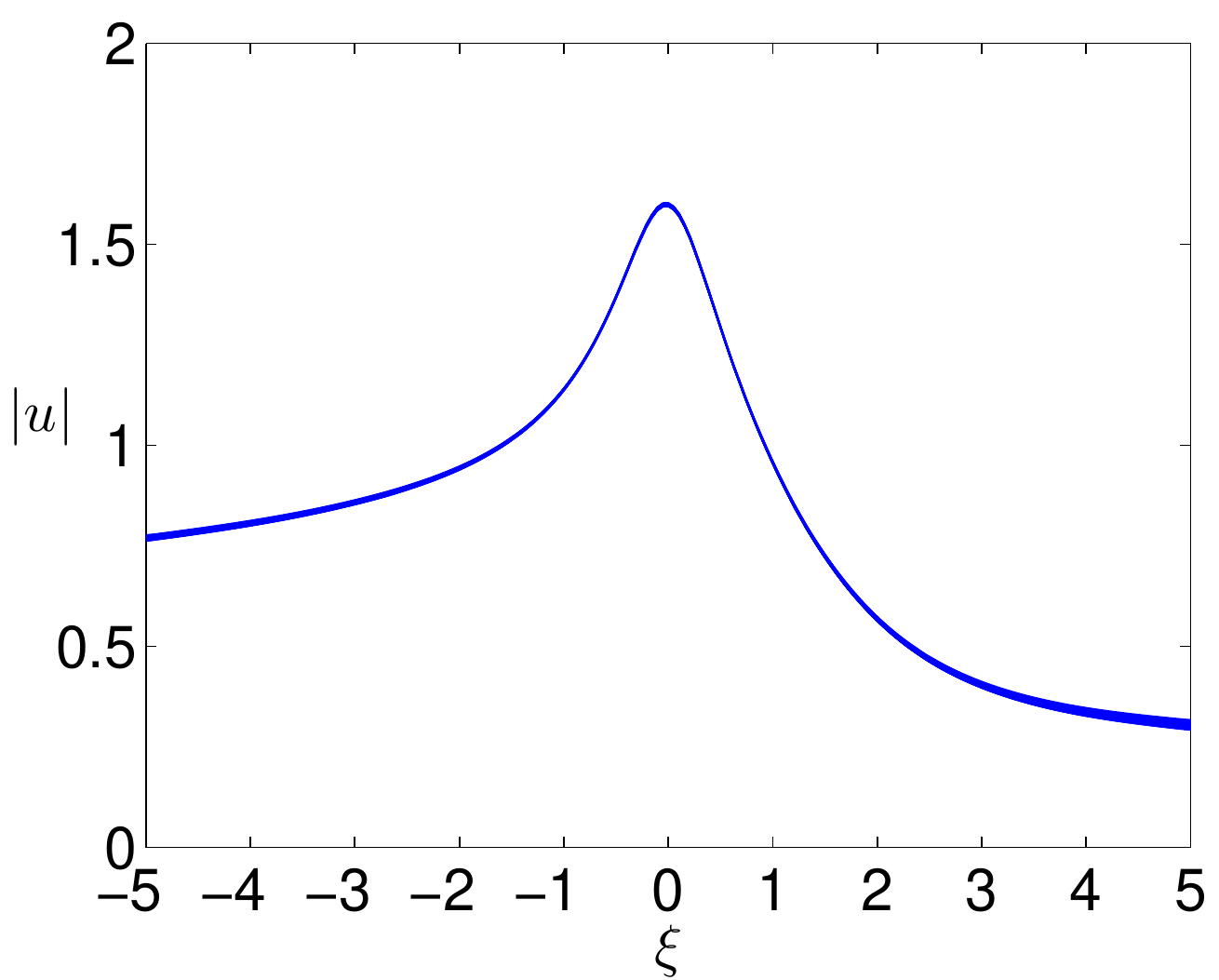}
    \includegraphics[width=0.45\linewidth]{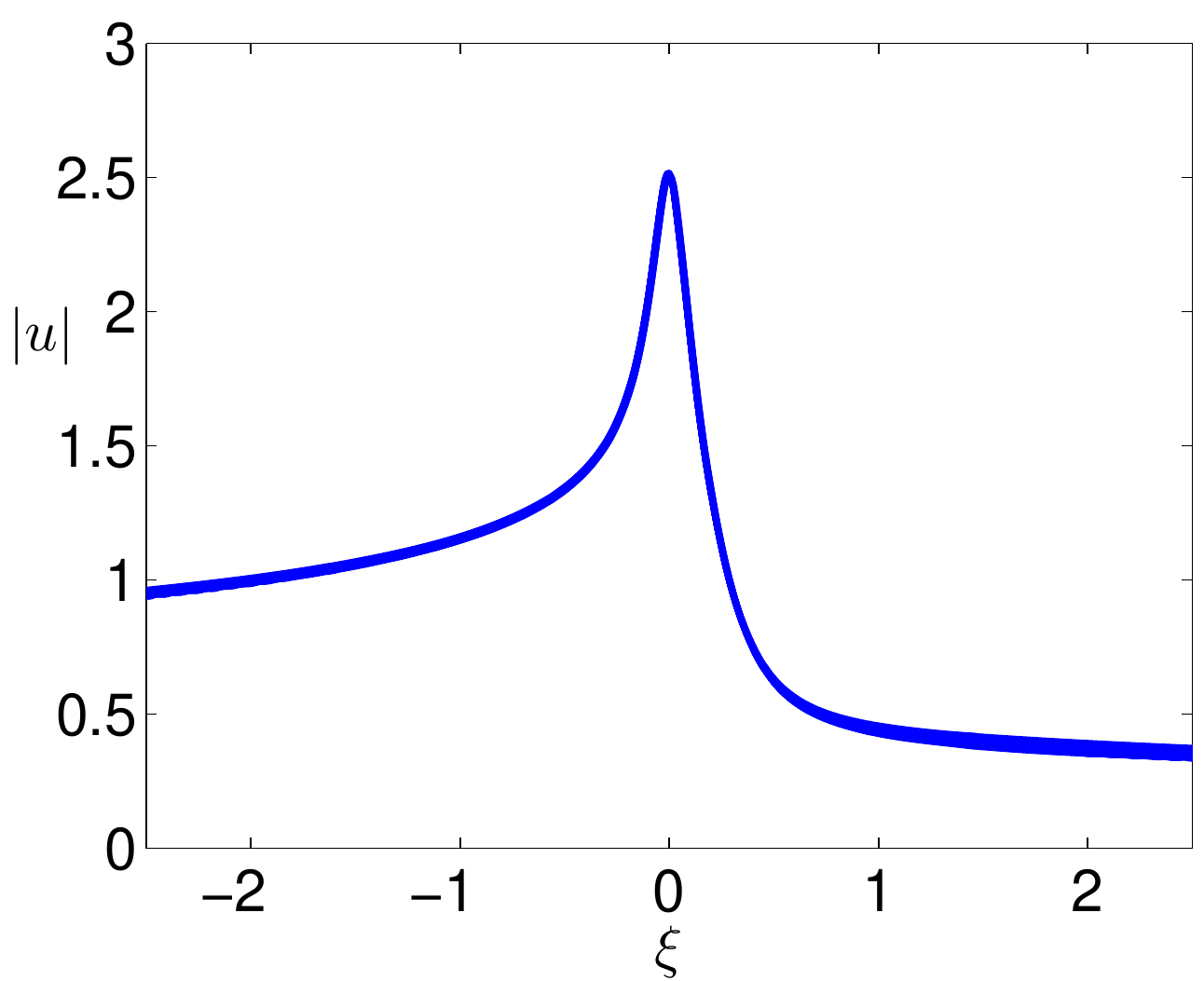}
    \caption{$|u(\xi,\tau)|$ versus $\xi$ , with initial condition
      \eqref{gauss} for $\tau$ from 4.05 to 6 with an increment of
      0.08. (left) and initial condition \eqref{lor} for $\tau$ from
      0.12 to 0.16 with an increment of 0.002. (right).}
    \label{fig:A=3_abs_u}
  \end{center}
\end{figure}

\begin{figure}
  \begin{center}
    \includegraphics[width=0.45\linewidth]{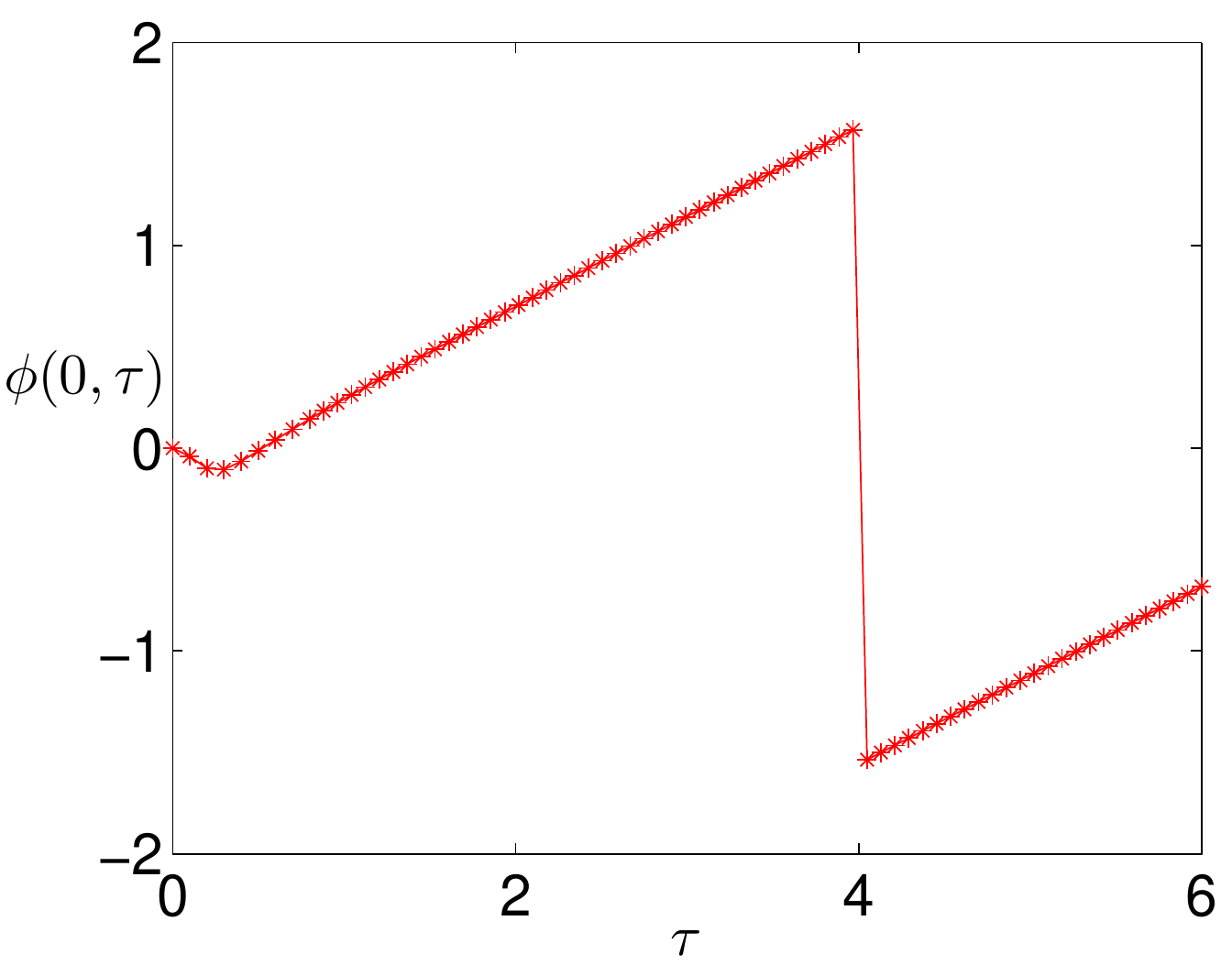}
    \includegraphics[width=0.45\linewidth]{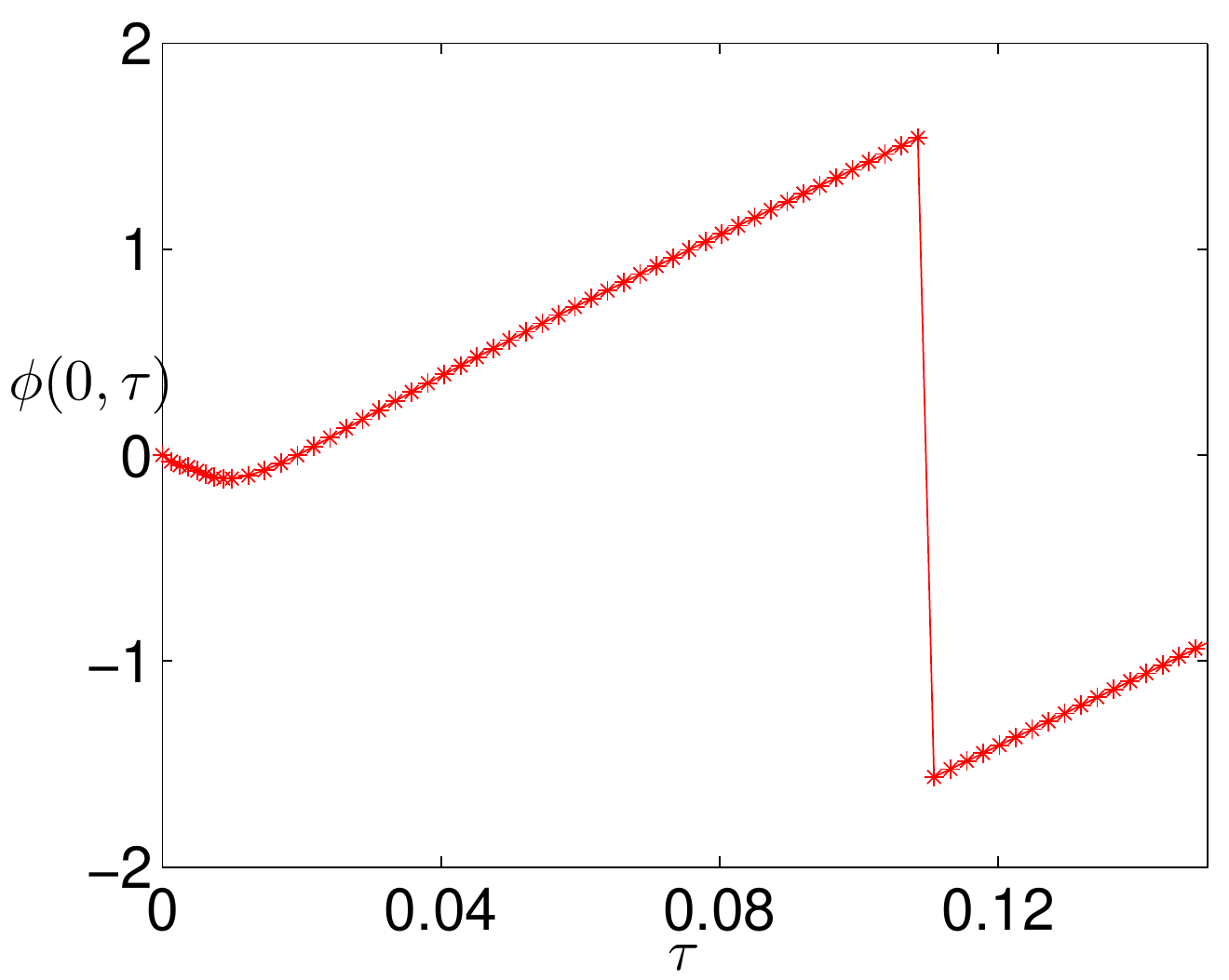}
    \caption{Time evolution of the phase at the origin $\phi(0,\tau)$,
      with initial conditions \eqref{gauss} (left) and \eqref{lor}
      (right).}
    \label{fig:A=3_phase_xi=0}
  \end{center}
\end{figure}

\begin{figure}
  \begin{center}
    \includegraphics[width=0.45\linewidth]{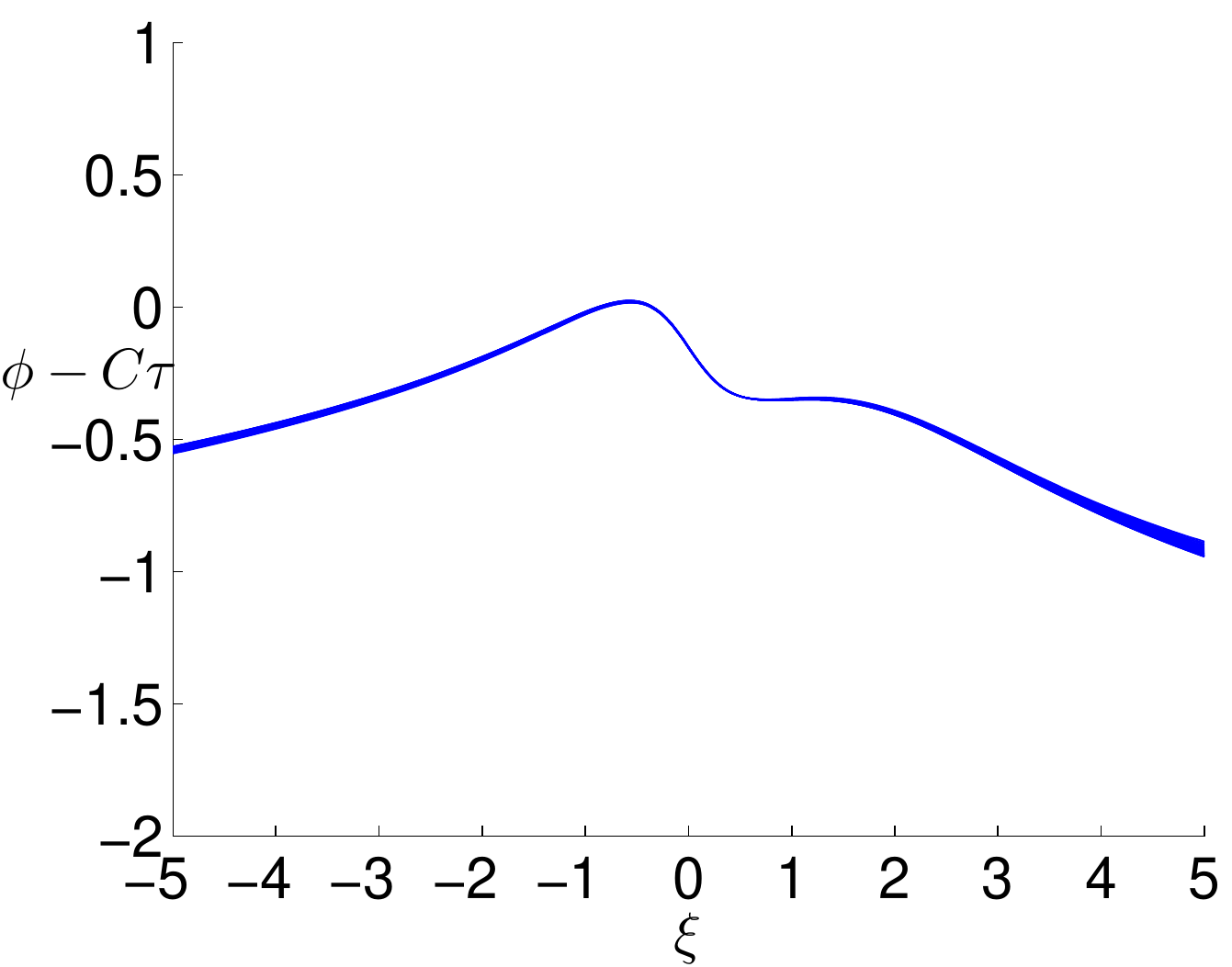}
    \includegraphics[width=0.45\linewidth]{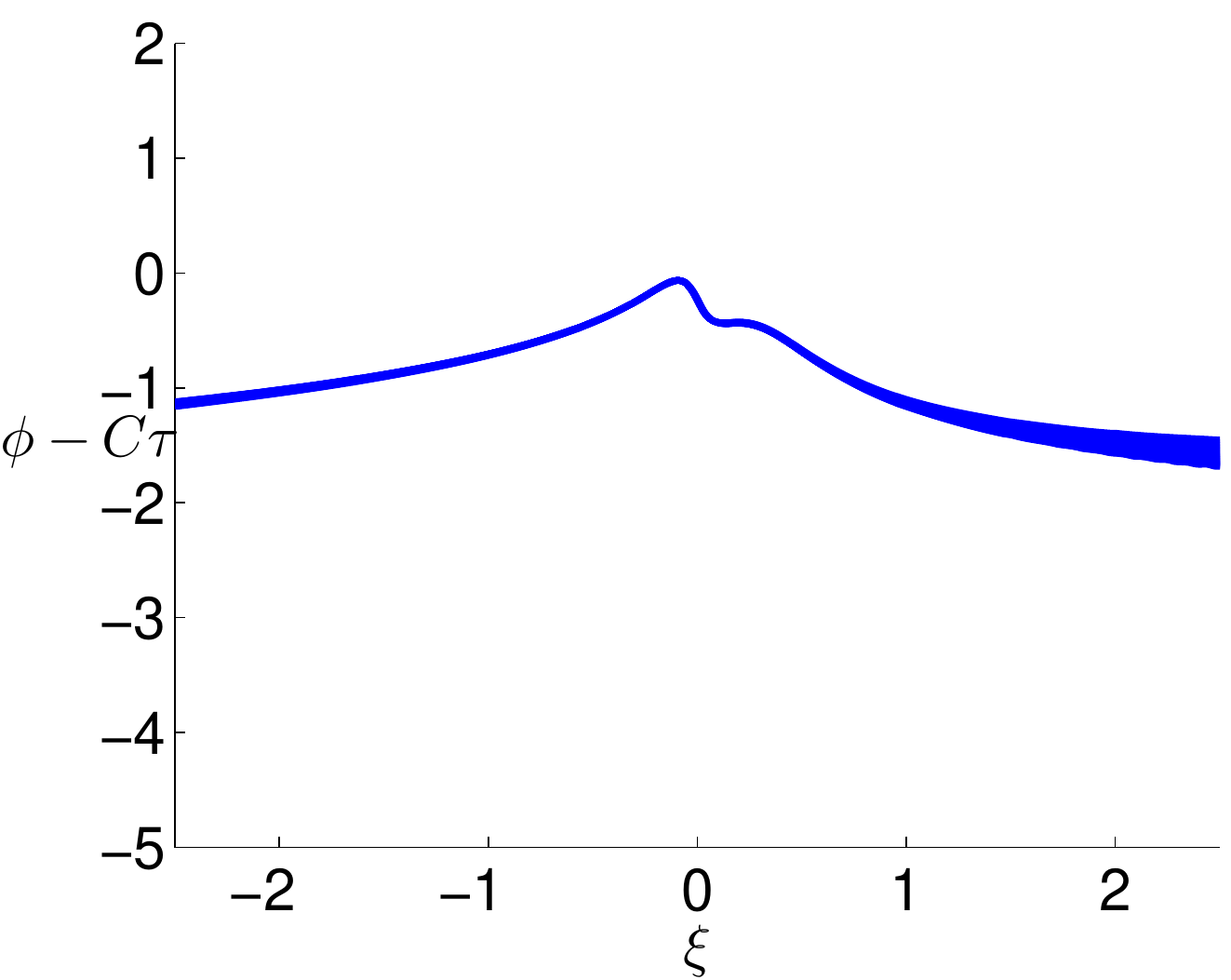}
    \caption{Time evolution of the modified phase
      $\phi(\xi,\tau)-\phi(0,\tau)$, with initial conditions
      \eqref{gauss} for $\tau$ from 4.05 to 6 with an increment of
      0.08.  (left) and \eqref{lor} for $\tau$ from 0.12 to 0.16 with
      an increment of 0.002. (right).}
    \label{fig:A=3_phase}
  \end{center}
\end{figure}

To check the accuracy of our computation, we compute the scaling
factor $L(t)$ in different ways. Either from the invariant quantities
\eqref{eqn:invariants_dyn_res}
\begin{equation}
  \label{eqn:L_em}
  L_{e} = \left(\frac{E(\psi)}{E(u)}\right)^{-\frac{\sigma}{1+\sigma}}, \; \;
  L_{m}     = \left(\frac{M(\psi)}{M(u)}\right)^{-\frac{\sigma}{1-\sigma}}
\end{equation}
or from integration of \eqref{eqn:a_b},
\begin{equation}
  \label{eqn:L_a}
  L_{a}=e^{-\int_0^{\tau} a(\tau)d\tau}.
\end{equation}
Since we have real initial conditions, we cannot use the momentum.
Table \ref{tab:L} shows the values of $L(t)$ obtained in the
simulation corresponding to the initial condition $\psi_0(x) = 3
e^{-2x^2}$ and $\sigma =2$.  The results are in good agreement.

\begin{table}[h]
  \caption{Estimates of $L(\tau)$, calculated using
    \eqref{eqn:L_em} and \eqref{eqn:L_a}, with the initial condition
    \eqref{gauss},  $A_0 = 3$. }
  \label{tab:L}
  \begin{tabular}{c l l l l l l l}
    \hline \hline
    $\tau$ & 1 & 2 & 3 & 4 & 5 & 6\\
    \hline
    $L_e$ & 0.03545&0.015233 & 0.006588& 0.002811 &0.001196  &0.000595\\
    $L_m$ & 0.03545&0.015233 &0.006588 &0.002811 & 0.001196&0.000555\\
    $L_a$ &0.035449 &  0.015233& 0.006588& 0.002811& 0.001196&0.000555\\
    \hline
  \end{tabular}

\end{table}

Using \eqref{eqn:ab} and the fact $a(\tau)$ and $b(\tau)$ tend to
constants A and B, we conclude
\begin{equation}
  \label{eqn:L}
  L^2\sim 2A(t^*-t),
\end{equation}
and,
\begin{equation}
  \frac{dx_0}{dt} =\frac{B}{\sqrt{2A(t^*-t)}},
\end{equation}
where $t^*$ is the blowup time. It follows
\[
x_0\sim x^* - B\sqrt{2(t^*-t)/A},
\]
where $x^*$ is the position of $\max_x{|\psi(x,t^*)|}$. Substituting
into \eqref{eqn:dyn_res_DNLS_system_u_a_b}, we obtain
\begin{equation}
  S_{\xi\xi} - CS +iA(\tfrac{1}{4}S +\xi S_\xi)-iBS_{\xi}+ i|S|^{4}S_\xi = 0.
\end{equation}
Introducing the scaling
\begin{equation}
  \widetilde{\xi} = \sqrt{C}\xi, \quad Q(\widetilde{\xi}) = C^{-\frac{1}{8}} S(\xi),
\end{equation}
and dropping the tildes, we have
\[
Q_{\xi\xi} - Q +i\alpha(\tfrac{1}{4}Q +\xi Q_\xi)-i\beta Q_{\xi}+
i|Q|^{4}Q_\xi = 0,
\]
with the rescaled constants:
\begin{equation}
  \alpha \equiv \frac{A}{C}, \quad \beta\equiv\frac{B}{\sqrt{C}}
\end{equation}
Like supercritical NLS, we find that the coefficients $\alpha$ and
$\beta$ and the function $Q$ are independent of the initial
conditions. Table \ref{tab:a} shows that the ratios $\alpha$ and
$\beta$ take the values $\alpha\sim1.95$ and $\beta\sim2.2$.  In
Figure \ref{fig:Q} and \ref{fig:Arg_Q}, we see that the amplitudes and
the phase of the function $Q$ constructed from the different initial
conditions are coincide.

\begin{table}[h]
  \caption{Limiting values of the  parameters for various initial conditions.}

  \label{tab:a}
  \begin{tabular}{ c c c c c c }
    \hline
    \hline
    $u_0$ & $C$ & $A$ & $B$  &$\alpha$ & $\beta$  \\
    \hline
    $2e^{-2x^2}$  & 1.594 & 3.159 & 2.866& 1.98 & 2.27\\
    $3e^{-2x^2}$ & 0.435&0.854 &1.5 & 1.96 & 2.25  \\
    $4e^{-2x^2}$ &0.098 & 0.187 & 0.691& 1.91  &2.21  \\
    $\frac{3}{(1+(3x)^2)}$ & 16.35 & 31.82 & 8.965& 1.94 & 2.22  \\
    \hline
  \end{tabular}

\end{table}

In conclusion, we have observed that for a large class of initial
conditions, solutions to \eqref{eqn:gDNLS} with quintic nonlinearity
($\sigma =2$) may blow up in a finite time. Their local description
near the singularity point is, up to a rescaling, given by
\eqref{e:universal_blowup}.

The inclusion of the translation parameter is a significant difference
from NLS with power law nonlinearity.  While later studies on
singularity formation in NLS allowed for symmetry breaking
\cite{Landman1991}, the preservation of radial symmetry under the flow
strongly simplifies both the computations and the analysis.  Due to
the mixture of hyperbolic and dispersive terms in gDNLS, no such
symmetry preservation is available, and we must be wary of the
tendency for the solution to migrate rightwards.

\begin{figure}[h]
  \begin{center}
    \includegraphics[width=0.7\linewidth]{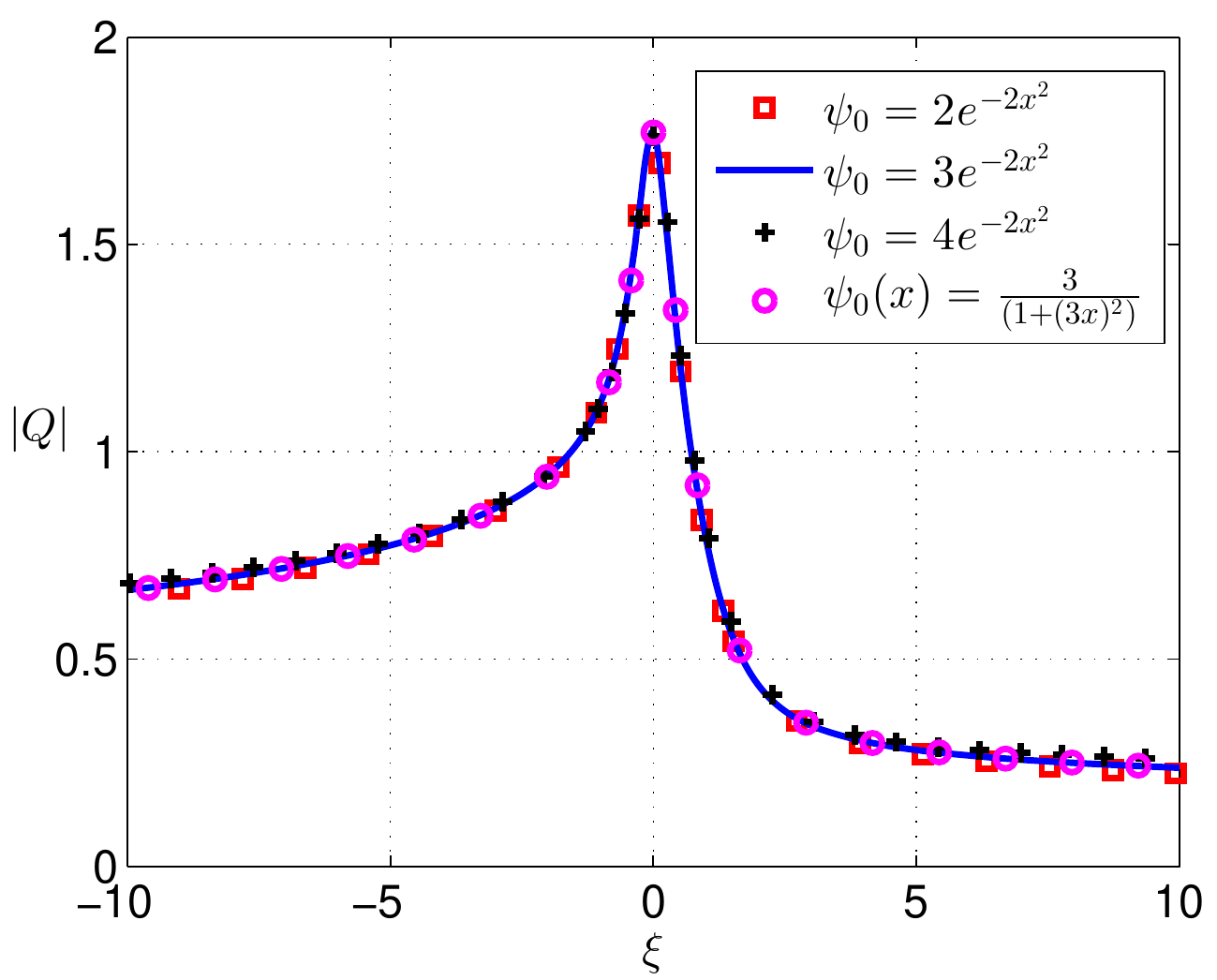}
    \caption{Asymptotic profile $|Q(\xi)|$ for different initial
      data.}
    \label{fig:Q}
  \end{center}
\end{figure}

\begin{figure}[h]
  \begin{center}
    \includegraphics[width=0.7\linewidth]{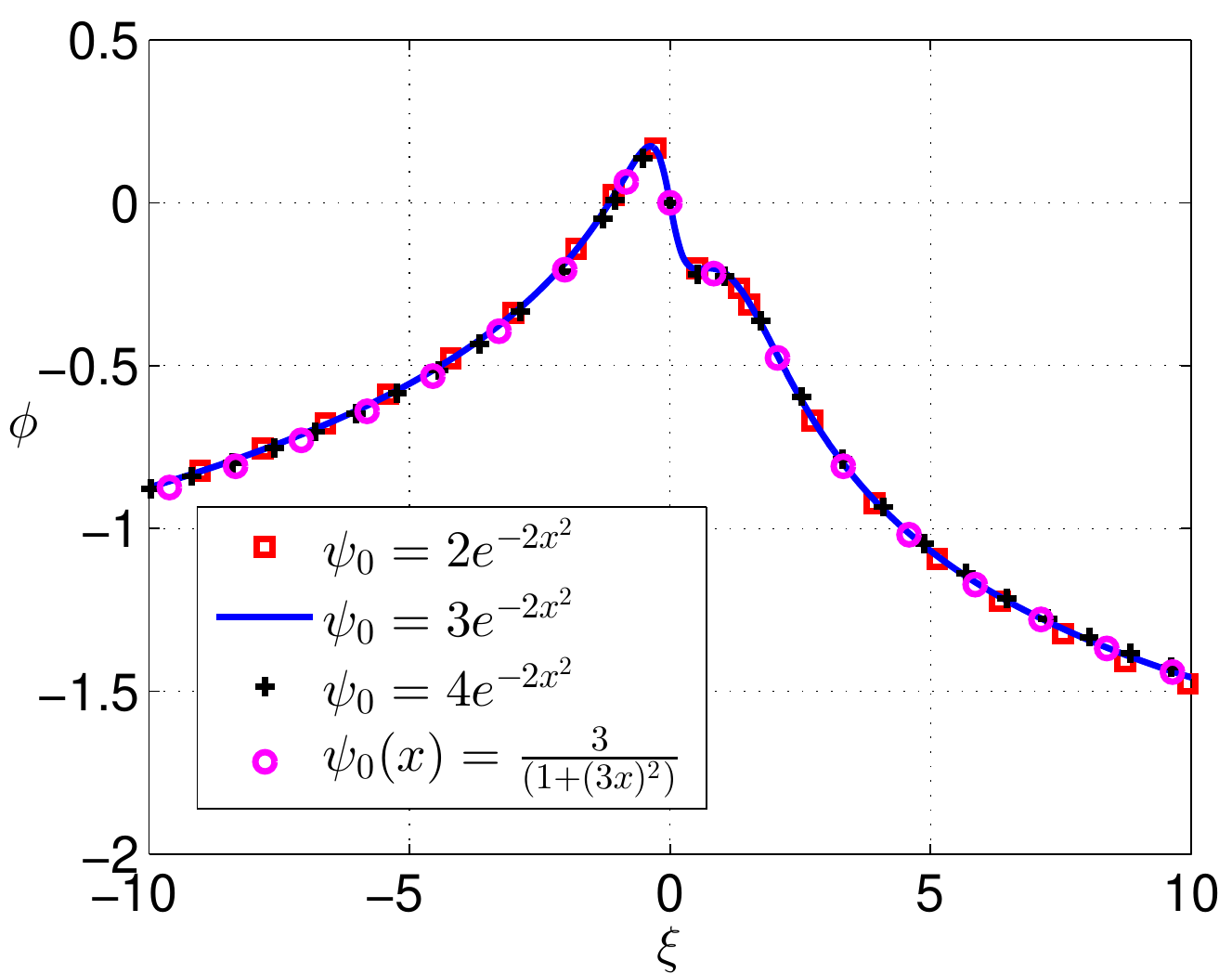}
    \caption{Asymptotic phase $\phi(\xi) \equiv \arg(Q(\xi))$ for
      different initial data.}
    \label{fig:Arg_Q}
  \end{center}
\end{figure}

\section{gDNLS with Other Power Nonlinearities}
\label{s:generalsigma}

The following calculations address the question of singularity for
solutions of \eqref{eqn:gDNLS} when the power in the nonlinear term
$|\psi|^{2\sigma} \psi_x$ is such that $1<\sigma<2$. We recall that
$\sigma=1$ corresponds to the usual DNLS equation which is
$L^2$-critical.  Our calculations cover the values of $\sigma$ down to
1.1.  Computational difficulties precluded us from reducing it much
further.

Figure \ref{fig:a_b_sigma_neq_2} shows that $a(\tau)$ and $b(\tau)$
tend to constant values as $\tau$ increases.  For convenience, we have
plotted $a(\tau)/a_M$ and $b(\tau)/b_M$ versus $\tau/\tau_M$, where
$\tau_M$ is the maximum time of integration and $a_M= a(\tau_M)$,
$b_M=b(\tau_M$).  The initial condition is $\psi_0(x) = 3e^{-2x^2}$
when $\sigma = 1.7, 1.5$ and 1.3, while $\psi_0 = 4e^{-2x^2}$ for
$\sigma =1.1$.  We also remark that the more supercritical, the larger
$\sigma$ is, the shorter the transient period.  Table
\ref{tab:a_sigma} shows that $\alpha$ decreases as $\sigma$ varies
form 2 to 1.1, while $\beta$ first decreases and then increases.

The main conclusion of our study is that for this range of values of
$\sigma$, solutions to \eqref{eqn:gDNLS} may blow up in a finite time
and their local structure is similar to that of the case $\sigma=2$
discussed in the previous section and is given by
\eqref{e:universal_blowup}.

\begin{table}[h]
  \caption{Values of $\alpha$ and $\beta$ as $\sigma$ approaches to
    1.}

  \label{tab:a_sigma}
  \begin{tabular}{c c c c c c}
    \hline
    \hline
    $\sigma$ & 2 &   1.7  & 1.5 & 1.3 & 1.1\\
    \hline
    $\alpha$        & 1.98 & 1.26 & 0.85 & 0.38 & 0.04\\
    $\beta$        & 2.27 & 2.11 & 1.68 & 1.63 & 1.90\\
    \hline
  \end{tabular}
\end{table}

\begin{figure}
  \includegraphics[width=0.8\linewidth,type=pdf,ext=.pdf,read=.pdf]{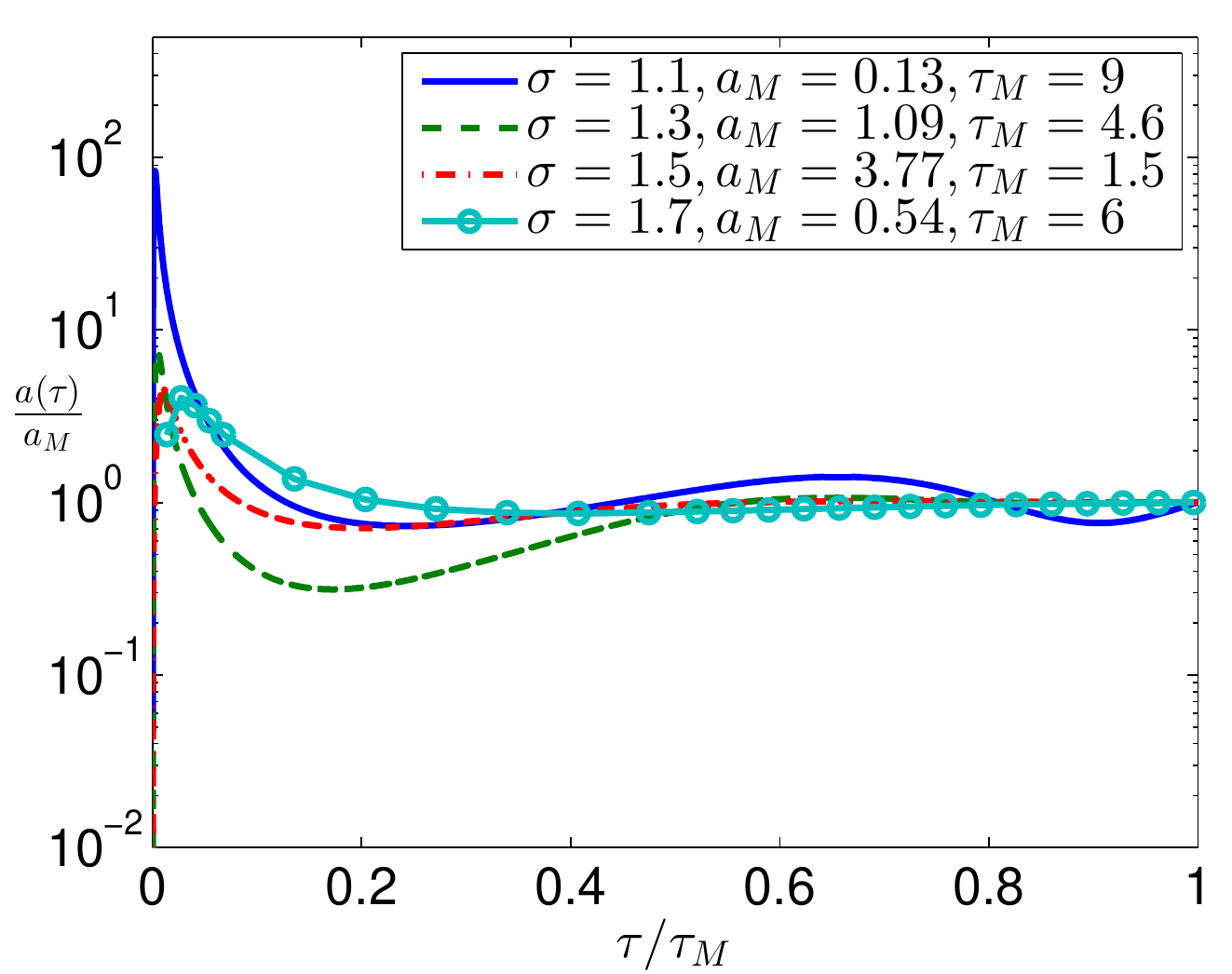}
  \includegraphics[width=0.8\linewidth,type=pdf,ext=.pdf,read=.pdf]{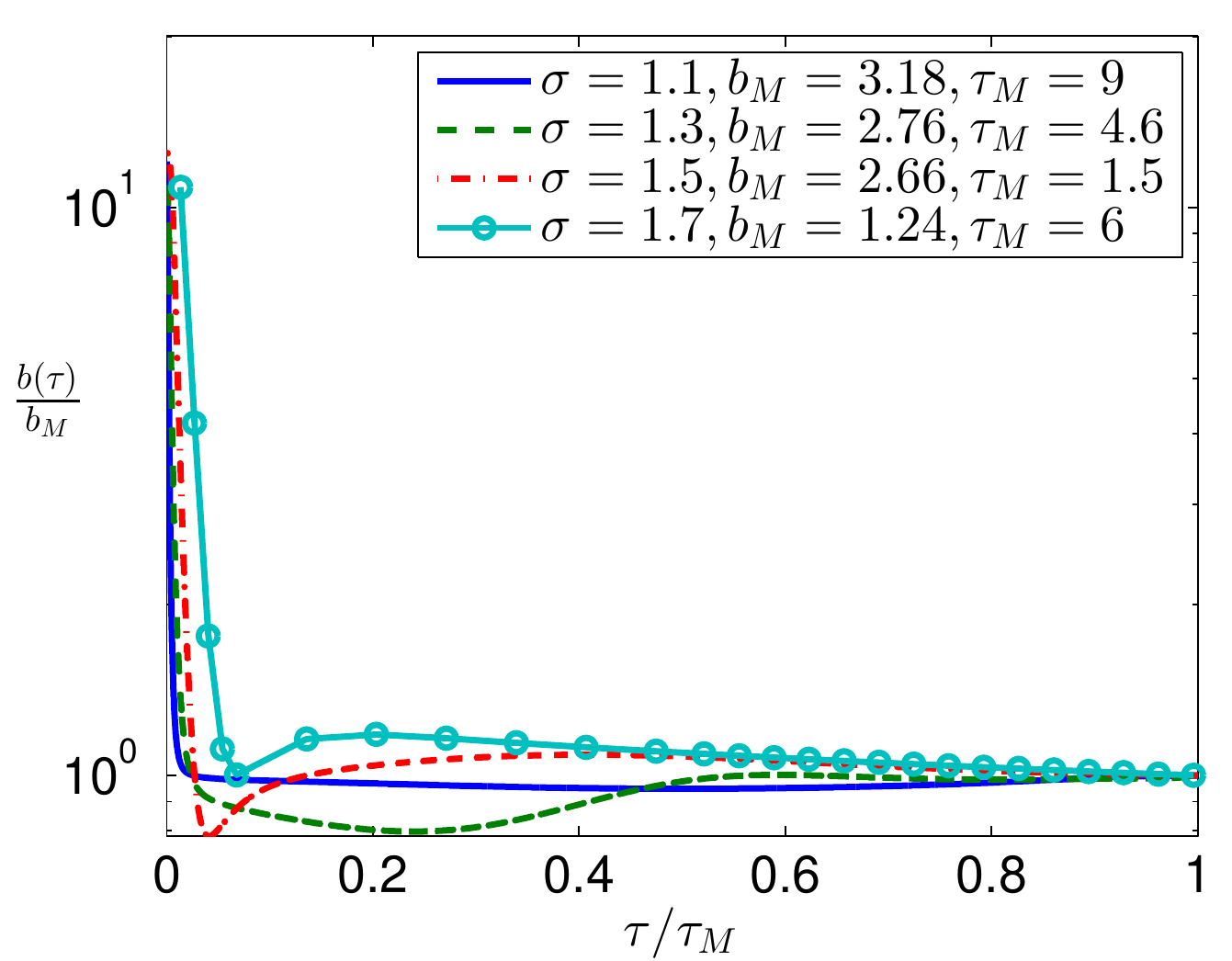}
  \caption{ $a(\tau)$ (up) and $b(\tau)$ (down) versus $\tau$, after
    normalization by $a_M = a(\tau_M)$ and $b_M = b(\tau_M)$, $\tau_M
    = \max(\tau)$ .}
  \label{fig:a_b_sigma_neq_2}
\end{figure}

\section{Blowup Profile}
\label{s:profile}
This section is devoted to the study of the nonlinear elliptic
equation \eqref{e:blowup_soln} satisfied by the complex profile
function $Q$.  It is helpful to change variables, letting $\eta
=\xi-\frac{\beta}{\alpha}$, leading to
\begin{equation}
  \label{eqn:Q_profile_2}
  Q_{\eta\eta} - Q +i\alpha(\tfrac{1}{2\sigma}Q +\eta Q_\eta)+
  i|Q|^{2\sigma}Q_\eta = 0.
\end{equation}
We seek profiles $|Q(\eta)|$ that decrease monotonically with
$|\eta|$, satisfying the conditions
\begin{equation*}
  Q(\eta)\rightarrow  0\quad\mbox{as}\quad \eta\rightarrow \pm\infty.
\end{equation*}
This can be viewed as a nonlinear eigenvalue problem, with
eigenparameter $\alpha$, and eigenfunction $Q$.  As the equation is
invariant to multiplication by a constant phase, we can assume, in
addition, that $Q(0)$ is real (for example).

\subsection{Properties of the Asymptotic Profile}

\begin{proposition}
  \label{prop:large_dis_beh}
  The asymptotic behavior of solutions to \eqref{eqn:Q_profile_2} as
  $\eta \to \pm \infty$, is given by $Q=c_1^\pm Q_1+c_2^\pm Q_2$ where
  \begin{equation}
    Q_1 \approx
    |\eta|^{-\frac{i}{\alpha}  -\frac{1}{2\sigma}} , \; \; \;
    Q_{2} \approx  e^{i(-\frac{\alpha}{2} \eta^2  )}
    |\eta| ^{\frac{i}{\alpha}+  \frac{1}{2\sigma}-1},
  \end{equation}
  and $c_1^\pm,c_2^\pm$ are complex numbers.
\end{proposition}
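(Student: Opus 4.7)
The strategy is to linearize the equation at infinity using the hypothesis $Q(\eta)\to 0$, identify the resulting linear ODE as a parabolic cylinder equation whose large-argument asymptotics are classical, and then treat the nonlinear term $i|Q|^{2\sigma}Q_\eta$ as a perturbation via variation of parameters.

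First, I would verify that the nonlinear contribution is subdominant at infinity. A dominant-balance analysis applied directly to \eqref{eqn:Q_profile_2} shows that if $|Q|$ is monotone decreasing, then either it decays algebraically at the rate $|Q|\lesssim |\eta|^{-1/(2\sigma)}$ (obtained by balancing $-Q$ against $i\alpha(\tfrac{1}{2\sigma}+\eta\partial_\eta)Q$), or it decays modulated by a Gaussian factor $e^{-i\alpha\eta^2/2}$ coming from the WKB balance $Q_{\eta\eta}\sim -i\alpha\eta Q_\eta$. In either regime $|Q|^{2\sigma}\lesssim |\eta|^{-1}$, so the nonlinear term is a genuine perturbation of the linear operator for $|\eta|$ large.

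Next I would solve the linearization
\[
Q_{\eta\eta} + i\alpha\eta Q_\eta + \bigl(\tfrac{i\alpha}{2\sigma} - 1\bigr)Q = 0.
\]
The substitution $Q = e^{-i\alpha\eta^2/4}V$ eliminates the drift term and produces the Weber-type equation
\[
V_{\eta\eta} + \Bigl(\tfrac{\alpha^2\eta^2}{4} + i\alpha\bigl(\tfrac{1}{2\sigma} - \tfrac{1}{2}\bigr) - 1\Bigr)V = 0,
\]
which after the further rescaling $z = (1+i)\sqrt{\alpha/2}\,\eta$ takes the standard parabolic cylinder form $U_{zz}(z) + (\nu + \tfrac{1}{2} - \tfrac{z^2}{4})U(z) = 0$ with index $\nu = \tfrac{1}{2\sigma} - 1 + \tfrac{i}{\alpha}$. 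The classical large-$|z|$ asymptotics $e^{-z^2/4}z^\nu$ and $e^{z^2/4}z^{-\nu-1}$ translate back, after undoing both substitutions, into precisely the two profiles $Q_2$ and $Q_1$ in the statement.

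With $Q_1,Q_2$ providing a fundamental pair for the linear operator and a nonzero constant Wronskian, I would then recast \eqref{eqn:Q_profile_2} on $\{|\eta|\geq\eta_0\}$ as the integral equation $Q = c_1 Q_1 + c_2 Q_2 + \mathcal{T}[Q]$, where $\mathcal{T}$ is variation of parameters applied to the nonlinear source $i|Q|^{2\sigma}Q_\eta$. The integrability of the resulting kernel, guaranteed by the subdominance established in the first step, makes $\mathcal{T}$ a contraction on a suitable weighted $L^\infty$ space once $\eta_0$ is large enough; this yields a remainder $R = Q - c_1 Q_1 - c_2 Q_2 = o(Q_1) + o(Q_2)$, completing the asymptotic expansion.

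The main technical obstacle is the non-holomorphic nonlinearity $|Q|^{2\sigma} = (Q\bar Q)^{\sigma}$: the problem cannot be analyzed as a scalar complex ODE and must instead be recast as a real $2\times 2$ first-order system for $(\Re Q, \Im Q)$, with the contraction argument run on this system. A second, more structural subtlety is that $\eta\to +\infty$ and $\eta\to -\infty$ must be handled separately, since the Stokes phenomenon for the parabolic cylinder equation produces different connection coefficients on the two sides; this is precisely what forces the distinct complex constants $c_{1,2}^+$ and $c_{1,2}^-$ in the statement.
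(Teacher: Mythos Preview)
Your approach is correct and genuinely different from the paper's. The paper never linearizes: it writes $Q=XZ$ with
\[
X=\exp\Bigl\{-\tfrac{i\alpha}{4}\eta^{2}-\tfrac{i}{2}\int_{0}^{\eta}|Q|^{2\sigma}\,d\eta'\Bigr\},
\]
absorbing the \emph{full} nonlinear drift into the phase from the outset, then passes to amplitude--phase variables $Z=Ae^{i\varphi}$ and performs a direct dominant balance on the resulting real system for $(A,\varphi_\eta)$. Two balances for $\varphi_\eta$ emerge, and $Q_1$, $Q_2$ drop out by formal expansion. Your route---drop the nonlinearity, recognize the remaining equation as parabolic-cylinder after $Q=e^{-i\alpha\eta^{2}/4}V$ and the rotation $z=(1+i)\sqrt{\alpha/2}\,\eta$, read off the classical large-$|z|$ asymptotics, then restore the nonlinearity via a Levinson-type contraction---is more systematic and closer to a rigorous proof, at the price of invoking special-function machinery and Stokes data. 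The paper's method is more self-contained and makes transparent how the nonlinear phase correction $\tfrac{1}{2}\int|Q|^{2\sigma}$ enters (it cancels exactly in $Q_1$ and is subdominant in $Q_2$), which is useful for the subsequent numerical boundary-value study.

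One small slip: the Wronskian of $Q_1,Q_2$ for your linearized operator is not constant, since the first-order term $i\alpha\eta Q_\eta$ forces $W(\eta)=W_0\,e^{-i\alpha\eta^{2}/2}$; but it is nonvanishing, which is all the variation-of-parameters step actually needs.
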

\begin{proof}
  We first write $Q(\eta)=X(\eta)Z(\eta)$ and choose $X$ so that the
  resulting second-order equation for $Z$ does not contain first order
  terms.  After subsitution in the $Q$-equation, we get
  \begin{equation}
    \begin{split}
      X Z_{\eta\eta} + &(2 X_\eta + i \alpha \eta X + iX|X|^{2\sigma}|Z|^{2\sigma}) Z_\eta\\
      + &(X_{\eta\eta} - X + i \frac{\alpha}{2\sigma} X + i \alpha
      \eta X_\eta + i|X|^{2\sigma}|Z|^{2\sigma} X_\eta ) Z = 0.
    \end{split}
  \end{equation}
  We now let
  \begin{equation}
    \label{eqn:X}
    \begin{split}
      X &= \exp\set{-i \frac{\alpha}{4} \eta^2 -
        i\frac{1}{2}\int_0^\eta |Z|^{2\sigma} d\eta } \\
      &=\exp\set{-i \frac{\alpha}{4} \eta^2 - i\frac{1}{2}\int_0^\eta
        |Q|^{2\sigma} d\eta },
    \end{split}
  \end{equation}
  so that the equation for $Z$ reduces to
  \begin{equation}
    \label{eqn:Z_full}
    \begin{split}
      Z_{\eta\eta} + &\left(\frac{i\alpha(1-\sigma)}{2\sigma} -1 +
        \frac{(\alpha \eta)^2}{4} + \frac{|Z|^{4\sigma}}{4} +
        \frac{\alpha}{2}\eta|Z|^{2\sigma}
        -\frac{i}{2}(|Z|^{2\sigma})_\eta\right) Z=0.
    \end{split}
  \end{equation}
  Denoting $Z= Ae^{i\varphi}$ and $\theta= \varphi_\eta$, we
  rewrite\eqref{eqn:Z_full} in terms of phase and amplitude:

  \begin{subequations}
    \begin{equation}
      \label{eqn:A}
      \frac{A''}{A} -\theta^2 -1 + \frac{\alpha^2\eta^2}{4} +\frac{\alpha\eta}{2}A^{2\sigma} + \frac{1}{4} A^{4\sigma} =0
    \end{equation}
    \begin{equation}
      \label{eqn:theta}
      \theta' + 2\theta\frac{A'}{A} + \frac{\alpha(1-\sigma)}{2\sigma} -\frac{1}{2}(A^{2\sigma})' = 0
    \end{equation}
  \end{subequations}

  Anticipating that $A \to 0$ when $\eta\to \pm\infty$, we see that to
  balance the quadratic and constant in $\eta$ terms of \eqref{eqn:A},
  we must have
  \begin{equation}
    \theta_1 = \left ( \frac{\alpha \eta}{2} - \frac{1}{\alpha \eta} \right) +\gamma_1(\eta),\quad
    \theta_2 =  -\left( \frac{\alpha \eta}{2} - \frac{1}{\alpha \eta} \right) +\gamma_2(\eta)
  \end{equation}
  where $\gamma_{1,2}(\eta)$ is at most $O(\eta^{-1})$.  This leads to
  two cases:

  \noindent {\bf Case 1.} \ $ \theta_1 = ( \frac{\alpha \eta}{2} -
  \frac{1}{\alpha \eta} ) +\gamma_1(\eta)$. Assuming $ A_1\approx
  c_1^{\pm}|\eta|^{-p}$,where $c_1$ is a constant, we get from
  \eqref{eqn:theta} that $p=1/2\sigma$ by balancing the $O(1)$ terms.

  Substituting these estimates of $\theta_1$ and $A_1$ into
  \eqref{eqn:A}, we have to balance the $O(1)$ terms $\alpha \eta
  \gamma_1$ and $\frac{\alpha}{2} \eta A_1^{2\sigma}$.  Hence, as
  $\eta\to \pm \infty$,
  \begin{equation}
    \theta_1 \approx   \frac{\alpha \eta}{2}
    - \frac{1}{\alpha \eta}  + \frac{1}{2} |Q_1|^{2\sigma} + o(\eta^{-1}); \; \;
    A_1\approx c_1^{\pm} |\eta|^{-1/{2\sigma}}(1+o(1)).
  \end{equation}
  Retaining $\gamma_1$ is essential since it is of order
  $O(\eta^{-1})$ and is the only term capable of balancing $\eta
  A_1^{2\sigma}$.  Returning to the function $Q_1$, we get one family
  of solutions with the farfield behavior
  \begin{equation}
    Q_1 \approx c_1^{\pm}\abs{\eta}^{-\frac{1}{2\sigma} -\frac{i}{\alpha}} \; {\rm as} \;
    \eta \to \pm \infty.
  \end{equation}

  \noindent {\bf Case 2.}  \ $ \theta_2 = - ( \frac{\alpha \eta}{2} -
  \frac{1}{\alpha \eta} ) +\gamma_2(\eta)$ . Assuming, again, $A_2$ is
  asymptotically a power law, we get from \eqref{eqn:theta},
  $A_2\approx c_2^{\pm} |\eta|^{-1+ \frac{1}{2\sigma}}$, where
  $c_2^{\pm}$ is a constant.

  Substituting back into \eqref{eqn:A}\, we get $\gamma_2\approx
  -\tfrac{1}{2}\abs{A_2}^{2\sigma}$, so that
  \begin{equation}
    \theta_2 \approx -\frac{\alpha \eta}{2}
    +\frac{1}{\alpha \eta} - \frac{1}{2} \abs{Q_2}^{2\sigma} +
    o(\eta^{1-2\sigma}); \;\; A_2 \approx c_2^{\pm} \abs{\eta}^{-1+ \frac{1}{2\sigma}}(1+o(1))
  \end{equation}
  leading to
  \begin{equation}
    Q_2\approx c_2^{\pm}  e^{-i\frac{\alpha}{2} \eta^2} |\eta|^{\frac{i}{\alpha} +
      \frac{1}{2\sigma}-1} e^{- i \int_0^\eta |Q_2|^{2\sigma} d\eta}.
  \end{equation}
  Finally, we can neglect the last term in the phase and write
  \begin{equation}
    Q_2\approx  c_2^{\pm} e^{-i\frac{\alpha}{2} \eta^2} |\eta|^{\frac{i}{\alpha} +
      \frac{1}{2\sigma}-1} \; {\rm as} \;
    \eta \to \pm \infty.
  \end{equation}

\end{proof}
\begin{proposition}
  If $Q$ is a solution of \eqref{eqn:Q_profile_2} with $Q_\eta \in
  L^2(\mathbb{R})$ and $Q \in L^{4\sigma +2}(\mathbb{R})$, its energy
  vanishes:
  \begin{equation}
    \label{eqn:ham_van}
    \int_{-\infty}^\infty|Q_\eta|^2 d\eta+\frac{1}{(\sigma+1)}\Im\int_{-\infty}^\infty |Q|^{2\sigma}\bar{Q}Q_\eta d\eta=0.
  \end{equation}
  \label{p:ham_van}
\end{proposition}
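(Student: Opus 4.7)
The plan is to derive the identity as a Pohozaev-type relation obtained by testing the profile equation \eqref{eqn:Q_profile_2} against $\bar Q$. First I would multiply the equation by $\bar Q$ pointwise and integrate over $\mathbb{R}$. Integration by parts converts the leading term to $\int Q_{\eta\eta}\bar Q\,d\eta = -\int|Q_\eta|^2\,d\eta$, with the boundary contribution $[Q_\eta\bar Q]_{-\infty}^{\infty}$ vanishing thanks to $Q_\eta \in L^2$, $Q \in L^{4\sigma+2}$, and the far-field decay extracted from Proposition~\ref{prop:large_dis_beh} (in particular, solutions of the stated integrability class must sit on the $Q_1$-branch only, since $Q_2$ is too large to give $Q_\eta \in L^2$).

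Next I would decompose the nonlinear contributions using amplitude--phase coordinates: writing $\varphi = \arg Q$, one has
\[
\bar Q\, Q_\eta = \tfrac{1}{2}\bigl(|Q|^2\bigr)_\eta + i\,|Q|^2\varphi_\eta,\qquad
|Q|^{2\sigma}\bar Q\, Q_\eta = \tfrac{1}{2(\sigma+1)}\bigl(|Q|^{2\sigma+2}\bigr)_\eta + i\,|Q|^{2\sigma+2}\varphi_\eta.
\]
Taking the real part of the integrated identity and discarding the total-derivative terms then gives the first Pohozaev relation
\[
\int|Q_\eta|^2\,d\eta \;+\; \Im\int|Q|^{2\sigma}\bar Q\, Q_\eta\,d\eta \;+\; \int |Q|^2\bigl(1+\alpha\eta\varphi_\eta\bigr)\,d\eta \;=\;0.
\]
The integrand of the last term is $L^1$ even though $|Q|^2$ itself is not: from Proposition~\ref{prop:large_dis_beh} one has $\alpha\eta\varphi_\eta \to -1$ at infinity, so $1+\alpha\eta\varphi_\eta$ provides precisely the cancellation needed to overcome the slow $|\eta|^{-1/\sigma}$ tail of $|Q|^2$.

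The remaining step is to convert the coefficient on the nonlinear integral from $1$ to $\frac{1}{\sigma+1}$. For this I would perform a second test against the multiplier $\eta\bar Q_\eta$ (or, alternatively, $|Q|^{2\sigma}\bar Q$, whose $L^2$ membership is guaranteed by the hypothesis $Q \in L^{4\sigma+2}$), take the real part, and integrate by parts. The resulting companion identity expresses $\int|Q|^2(1+\alpha\eta\varphi_\eta)$ as a multiple of $\Im\int|Q|^{2\sigma}\bar Q Q_\eta$; algebraic elimination between the two identities yields the stated energy vanishing.

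The main obstacle is precisely the delicate handling of $\int|Q|^2(1+\alpha\eta\varphi_\eta)$: for $\sigma>1$ the profile decays only like $|\eta|^{-1/(2\sigma)}$, so that $\int|Q|^2$ and $\int \eta\,|Q|^2\varphi_\eta$ diverge individually at matching rates $|\eta|^{1-1/\sigma}$. The rigorous argument therefore has to proceed on truncated intervals $[-R,R]$, track the divergent boundary contributions $[\eta|Q|^2]_{-R}^R$ carefully, and invoke the refined asymptotics of Proposition~\ref{prop:large_dis_beh} to show that the limit $R\to\infty$ exists and equals the stated combination.
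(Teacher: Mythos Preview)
Your approach is genuinely different from the paper's, and considerably more complicated. The paper does not test against $\bar Q$ at all; instead it multiplies \eqref{eqn:Q_profile_2} by $\bar Q_{\eta\eta}$, takes the \emph{imaginary} part, and integrates. This single test yields
\[
\alpha\,\Re\!\int\Bigl(\tfrac{1}{2\sigma}Q+\eta Q_\eta\Bigr)\bar Q_{\eta\eta}\,d\eta
\;+\;\Re\!\int|Q|^{2\sigma}Q_\eta\,\bar Q_{\eta\eta}\,d\eta=0.
\]
The first integral equals $-\tfrac{\alpha(\sigma+1)}{2\sigma}\int|Q_\eta|^2$ after one integration by parts (all boundary terms decay, since $|Q_\eta\bar Q|,\,\eta|Q_\eta|^2\sim|\eta|^{-1-1/\sigma}$). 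For the second, the paper substitutes $\bar Q_{\eta\eta}$ back from the conjugated equation; the terms $i\alpha\eta|Q_\eta|^2$ and $i|Q|^{4\sigma}|Q_\eta|^2$ are purely imaginary and drop, and what survives is exactly $-\tfrac{\alpha}{2\sigma}\Im\int|Q|^{2\sigma}\bar Q Q_\eta$. Dividing through by $-\tfrac{\alpha(\sigma+1)}{2\sigma}$ gives \eqref{eqn:ham_van} in one stroke. The point is that the multiplier $\bar Q_{\eta\eta}$ never produces $\int|Q|^2$, so no divergent integrals or delicate cancellations appear; only the hypotheses $Q_\eta\in L^2$ and $Q\in L^{4\sigma+2}$ are used.

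Your route, by contrast, creates the term $J=\int|Q|^2(1+\alpha\eta\varphi_\eta)$, which is a difference of two individually divergent quantities, and then hopes to eliminate $J$ via a second identity. There is a real gap here: neither of the companion multipliers you suggest obviously closes the system. Testing against $\eta\bar Q_\eta$ produces $\int\eta(|Q|^2)_\eta$, whose boundary term $\eta|Q|^2\sim|\eta|^{1-1/\sigma}$ \emph{diverges} for $\sigma>1$, so one is trading one divergent combination for another rather than eliminating it. Testing against $|Q|^{2\sigma}\bar Q$ produces $\int|Q|^{2\sigma+2}(1+\alpha\eta\varphi_\eta)$, which is not a scalar multiple of $J$ and does not permit straightforward elimination either. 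Making your scheme rigorous would require the next-order asymptotics of $1+\alpha\eta\varphi_\eta$ beyond what Proposition~\ref{prop:large_dis_beh} supplies. The paper's choice of multiplier sidesteps all of this.
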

\begin{proof}
  Multiplying \eqref{eqn:Q_profile_2} by $\bar{Q}_{\eta\eta}$, taking
  the imaginary parts, and integrating over the whole line gives
  \begin{equation}
    \label{eqn:Q_profile_im}
    \alpha\Re\int\left(\frac{Q}{2\sigma} +\eta Q_{\eta}\right) \bar{Q}_{\eta\eta}d\eta +\Re\int |Q|^{2\sigma} Q_\eta \bar{Q}_{\eta\eta} d\eta = 0.
  \end{equation}
  The first integral of \eqref{eqn:Q_profile_im} can be written
  \begin{equation}\label{4.4}
    \begin{split}
      \alpha\Re\int\left(\frac{Q}{2\sigma} +\eta Q_{\eta}\right)
      \bar{Q}_{\eta\eta}d\eta
      =\left(-\frac{\sigma+1}{2\sigma}\right)\alpha \int |Q_\eta|^2
      d\eta.
    \end{split}
  \end{equation}
  Using \eqref{eqn:Q_profile_2} to express $\bar{Q}_{\eta\eta}$, the
  second integral becomes
  \begin{equation} \label{4.5}
    \begin{split}
      &\Re \int |Q|^{2\sigma} Q_\eta (\bar{Q} +i\alpha(\frac{1}{2\sigma} \bar{Q} +\eta\bar{Q}_\eta) i|Q|^{2\sigma} \bar{Q}_\xi) d\eta \\
      & =\Re \int |Q|^{2\sigma}Q_\eta\bar{Q}
      +\frac{i\alpha}{2\sigma}|Q|^{2\sigma}Q_\eta\bar{Q}d\eta\\
      &= -\frac{\alpha}{2\sigma}\Im\int
      |Q|^{2\sigma}Q_{\eta}\bar{Q}d\eta.
    \end{split}
  \end{equation}
  Combining \eqref{4.4} and \eqref{4.5}, we obtain
  \eqref{eqn:ham_van}.
\end{proof}
\begin{proposition}
  If $Q$ is a solution of \eqref{eqn:Q_profile_2} with $\sigma>1$ and
  $\alpha >0$, and $Q\in H^1(\mathbb{R}) \bigcap
  L^{2\sigma+2}(\mathbb{R})$, then $Q \equiv 0$.
\end{proposition}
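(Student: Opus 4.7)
The plan is to test equation \eqref{eqn:Q_profile_2} against $\bar Q$ and extract a single Pohozaev-type identity from its imaginary part. Concretely, I multiply the equation by $\bar Q$, integrate over $\mathbb{R}$, and take imaginary parts. The linear dispersion and mass contributions drop out immediately: $\int Q_{\eta\eta}\bar Q\, d\eta = -\int |Q_\eta|^2\, d\eta$ (integration by parts is legitimate since $Q,Q_\eta\in L^2(\mathbb{R})$, hence $Q$ is continuous with $Q(\pm\infty)=0$), and $\int|Q|^2 d\eta$ is manifestly real, so neither contributes to the imaginary part.

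The remaining calculation is then just the identity $\Re(Q_\eta\bar Q)=\tfrac{1}{2}\partial_\eta|Q|^2$, which converts every surviving term into a weighted derivative of $|Q|^2$:
\begin{equation*}
\frac{\alpha}{2\sigma}\int_{\mathbb{R}}|Q|^2\,d\eta + \frac{\alpha}{2}\int_{\mathbb{R}}\eta\,\partial_\eta|Q|^2\,d\eta + \frac{1}{2(\sigma+1)}\int_{\mathbb{R}}\partial_\eta\bigl(|Q|^{2\sigma+2}\bigr)\,d\eta = 0.
\end{equation*}
The last integral vanishes because $|Q|^{2\sigma+2}\in L^1(\mathbb{R})$ is a perfect derivative of a nonnegative, continuous, integrable function (so $|Q(\eta)|^{2\sigma+2}\to 0$ at $\pm\infty$ along a subsequence, and in fact in the limit since the function is integrable). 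The middle integral equals $-\int|Q|^2 d\eta$ by integration by parts. Combining, and using that $\sigma > 1$ makes the coefficient nonzero,
\begin{equation*}
\frac{\alpha\,(1-\sigma)}{2\sigma}\int_{\mathbb{R}}|Q|^2\,d\eta = 0,
\end{equation*}
so $\int|Q|^2 d\eta = 0$ and hence $Q\equiv 0$.

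The one point that needs care is the boundary term in $\int\eta\,\partial_\eta|Q|^2\,d\eta = -\int|Q|^2\,d\eta$, since $Q\in H^1$ does not a priori force $\eta|Q(\eta)|^2\to 0$. I would handle this with a smooth cutoff $\phi_R(\eta)$ equal to $1$ on $[-R,R]$ and supported in $[-2R,2R]$ with $|\phi_R'|\lesssim 1/R$: multiply the equation by $\phi_R\bar Q$, perform every integration by parts in this bounded setting, and send $R\to\infty$. The new contribution $\int \phi_R'(\eta)\,\eta\,|Q|^2 d\eta$ is bounded by $C\int_{R\le|\eta|\le 2R}|Q|^2 d\eta\to 0$ because $|Q|^2\in L^1$, and the parallel cutoff argument, now using $|Q|^{2\sigma+2}\in L^1$, disposes of the nonlinear perfect-derivative term. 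Apart from this bookkeeping, the proof is a direct one-line identity; no use of the asymptotic analysis of Proposition~\ref{prop:large_dis_beh} or of Proposition~\ref{p:ham_van} is needed.
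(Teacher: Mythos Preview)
Your proof is correct and follows exactly the paper's approach: multiply \eqref{eqn:Q_profile_2} by $\bar Q$, integrate, and take the imaginary part to obtain $\frac{\alpha}{2}\bigl(\frac{1}{\sigma}-1\bigr)\int|Q|^2\,d\eta=0$. The only difference is that you supply the cutoff justification for the vanishing boundary terms, which the paper omits.
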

\begin{proof}
  Multiplying \eqref{eqn:Q_profile_2} by $\bar{Q}$, taking the
  imaginary part, and integrating over $\mathbb{R}$, we get
  \begin{equation}
    \frac{\alpha}{2} \left(\frac{1}{\sigma} -1\right)\int |Q|^2 d\eta = 0,
  \end{equation}
  thus $Q$ has to be identically zero.
\end{proof}
Consequently, solutions with finite energy have an infinite
$L^2$-norm.

\subsection{Numerical Integration of the Boundary Value Problem}

The purpose of this section is the numerical integration of the BVP
\eqref{eqn:Q_profile_2} in order to better understand the asymptotic
profile of the singular solutions of gDNLS.  We look for solutions
that behave like $c_1^{\pm} Q_1$ as $|\eta| \to\infty$,where
$c_1^{\pm}$ are complex constants, because $Q_2 $ does not have finite
energy.  It is convenient to rewrite the large $\eta$ behavior as a
Robin boundary condition of the form
\begin{equation}
  \label{BC-Q1}
  - Q + i \alpha \left(\frac{1}{2\sigma} Q + \eta Q_\eta \right) =0, \quad  |\eta|\to \infty.
\end{equation}
Solutions of \eqref{eqn:Q_profile_2} depend on the coefficient
$\alpha$.  Since the equation is invariant under phase translation, we
need an additional condition; for example, setting the phase to zero
at a particular point is satisfactory. This suggests that $\alpha$
needs to take particular values, like in the supercritical NLS
problem, \cite{Sulem1999}.  We are particularly interested in the
character of the asymptotic profile and of the coefficient $\alpha$ as
$\sigma $ approaches the critical case $\sigma=1$.  Our basic approach
is a continuation method in the parameter $\sigma$.

At first, we attempted to integrate \eqref{eqn:Q_profile_2} with
boundary conditions \eqref{BC-Q1} for values of $\sigma$ starting from
$\sigma=2$ to about $\sigma=1.2$. We observed that the peak of $|Q|$
rapidly moved to the left of the domain as $\sigma$ decreased,
limiting our calculations, (Figure \ref{fig:continuation_Q_moving}).
\begin{figure}
  \centering
  \includegraphics[width=0.8\linewidth]{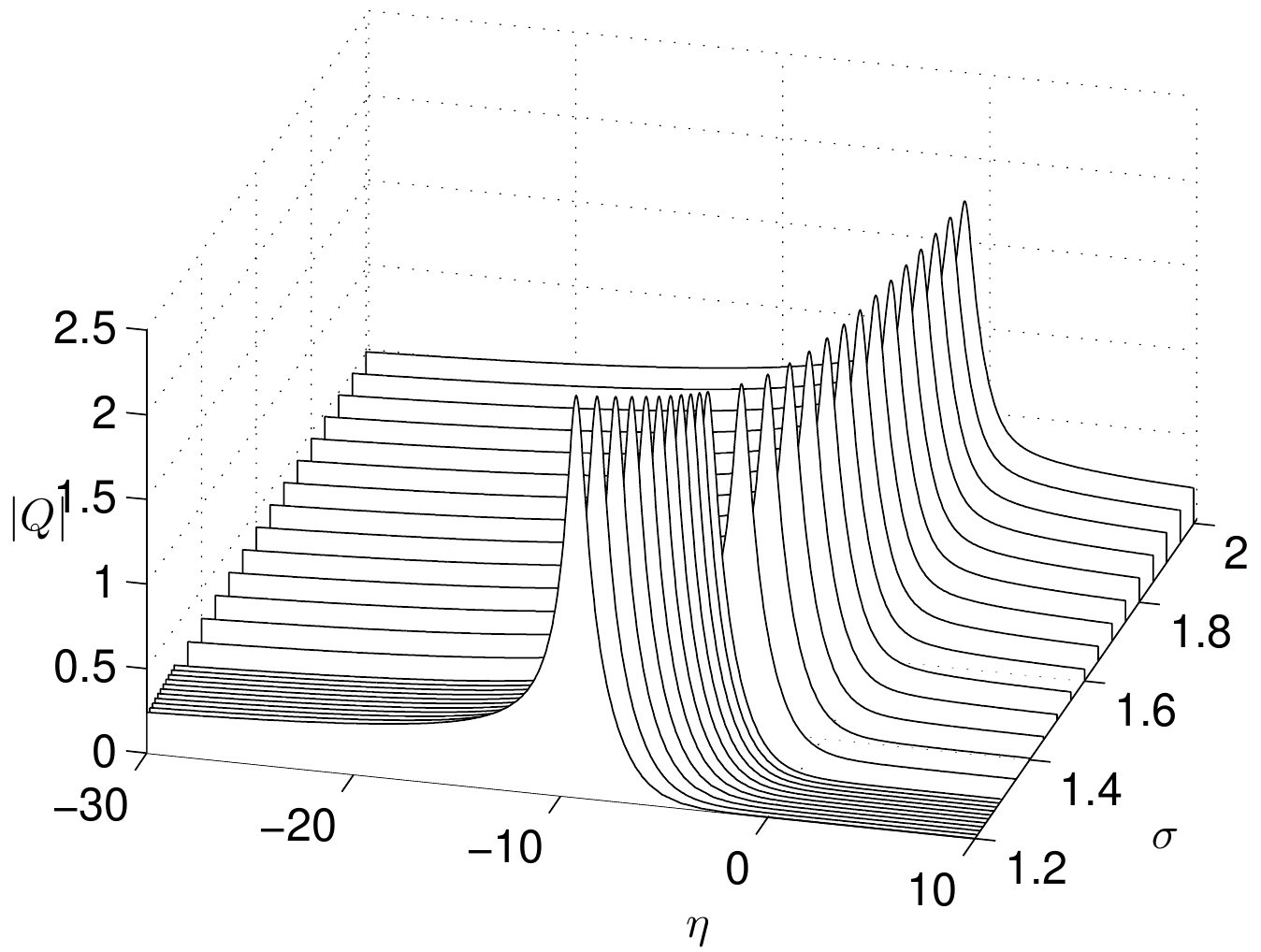}\\
  \caption{Asymptotic profile $|Q(\eta)|$ for various $\sigma$, where
    $\beta$ has been translated to zero and the peak is permitted
    move.}
  \label{fig:continuation_Q_moving}
\end{figure}

To reach values of $\sigma $ closer to $1$, we returned to
\eqref{e:blowup_soln} (which is equivalent to \eqref{eqn:Q_profile_2}
after the translation $\xi =\eta+ \beta/\alpha$). Now the parameter
$\beta$ is free and will be chosen so that the maximum of $|Q|$ is at
$\xi =0$.  This adds a condition, namely $|Q|_\xi(0)=0$ and an
additional unknown, the coefficient $\beta$.  Figure
\ref{fig:continuation_Q} shows that the solution of
\eqref{e:blowup_soln} when $\sigma$ varies from 2 to 1.08.  As $\sigma
\to 1$, we make several observations on $Q$.  The amplitude increases,
and the left shoulder becomes lower.  Oscillations also appear in the
real and imaginary components (Figure \ref{fig:continuation_ReQ_ImQ}).
We observe that the parameter $\alpha$ decreases as $\sigma$
approaches 1, while $\beta$ first decreases and then increases (Figure
\ref{fig:bvp_a_b}).  A detailed analysis of the dependence of these
parameters on $\sigma$ is the subject of a future study.

As a practical matter, we integrated \eqref{e:blowup_soln} in two
adjacent domains $(-\infty, 0]$ and $[0, \infty)$, using the
multipoint feature of the {\sc Matlab} {\tt{bvp4c}} solver, with Robin
boundary conditions at $\pm \infty$ and continuity conditions on $Q$
and its first derivative $Q_\xi$ at $\xi =0$.  Additional details are
given in Appendix \ref{sec:BVP}.

\begin{figure}[h]
  \begin{center}
    \includegraphics[width=0.8\linewidth]{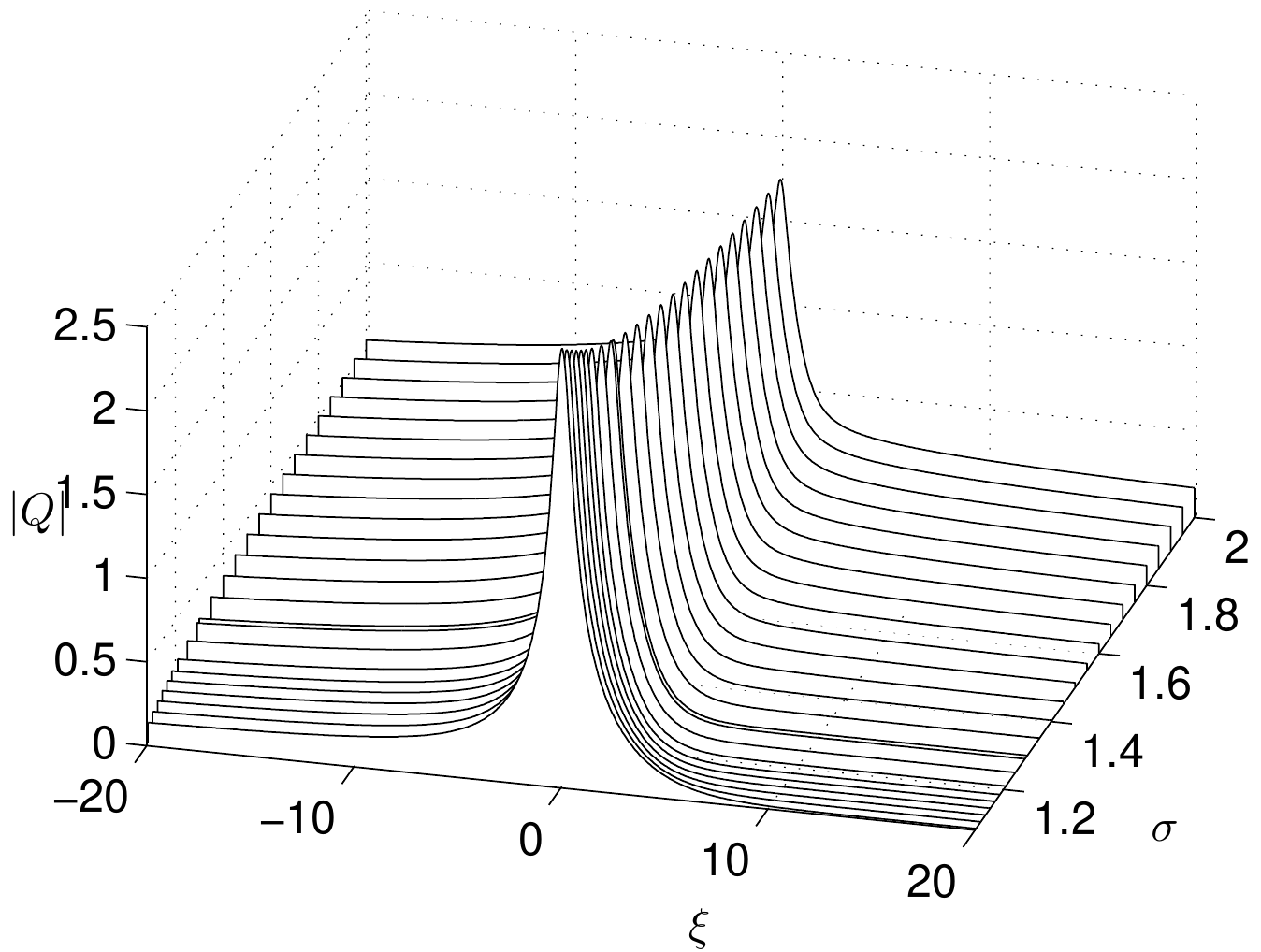}
    \caption{Asymptotic profile $|Q(\xi)|$ for various $\sigma$, where
      $\beta$ is a free parameter and the peak is fixed at the
      origin.}
    \label{fig:continuation_Q}
  \end{center}
\end{figure}

\begin{figure}[h]
  \begin{center}
    \includegraphics[width=0.45\linewidth]{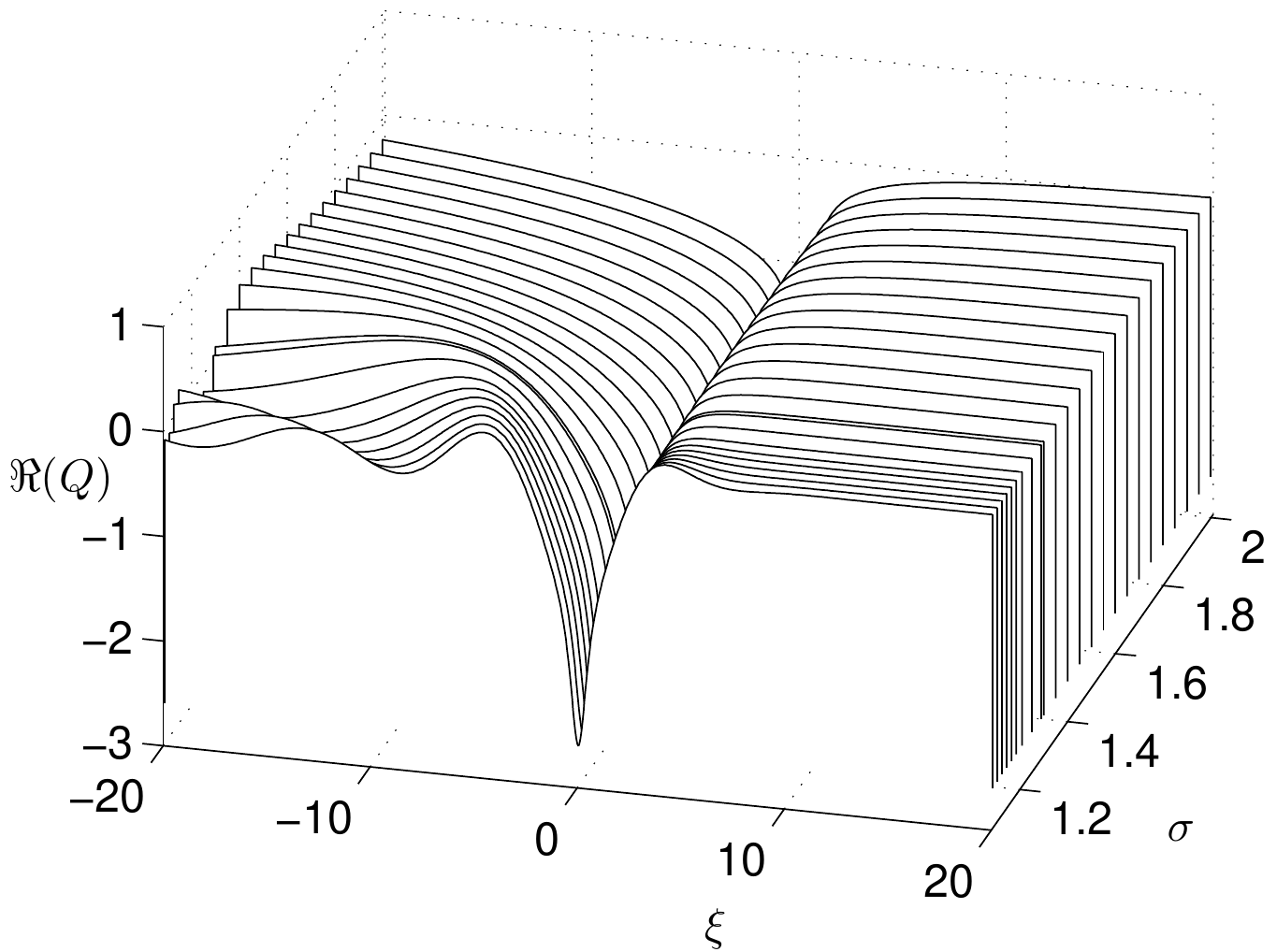}
    \includegraphics[width=0.45\linewidth]{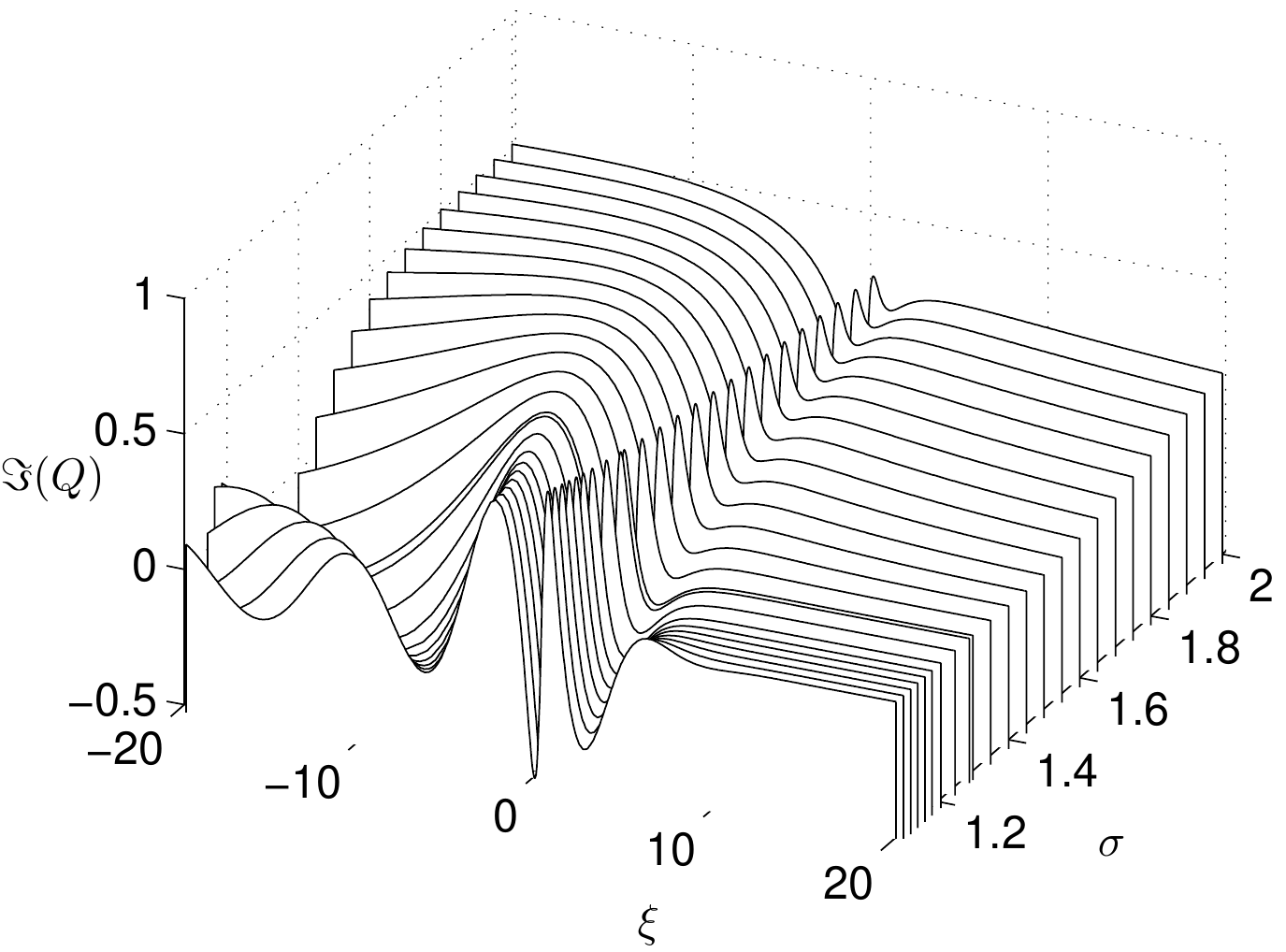}
    \caption{Asymptotic profile $\Re(Q)$ (left) and $\Im(Q)$ (right)
      for various $\sigma$.}
    \label{fig:continuation_ReQ_ImQ}
  \end{center}
\end{figure}

\begin{figure}[h]
  \begin{center}
    \includegraphics[width=0.45\linewidth]{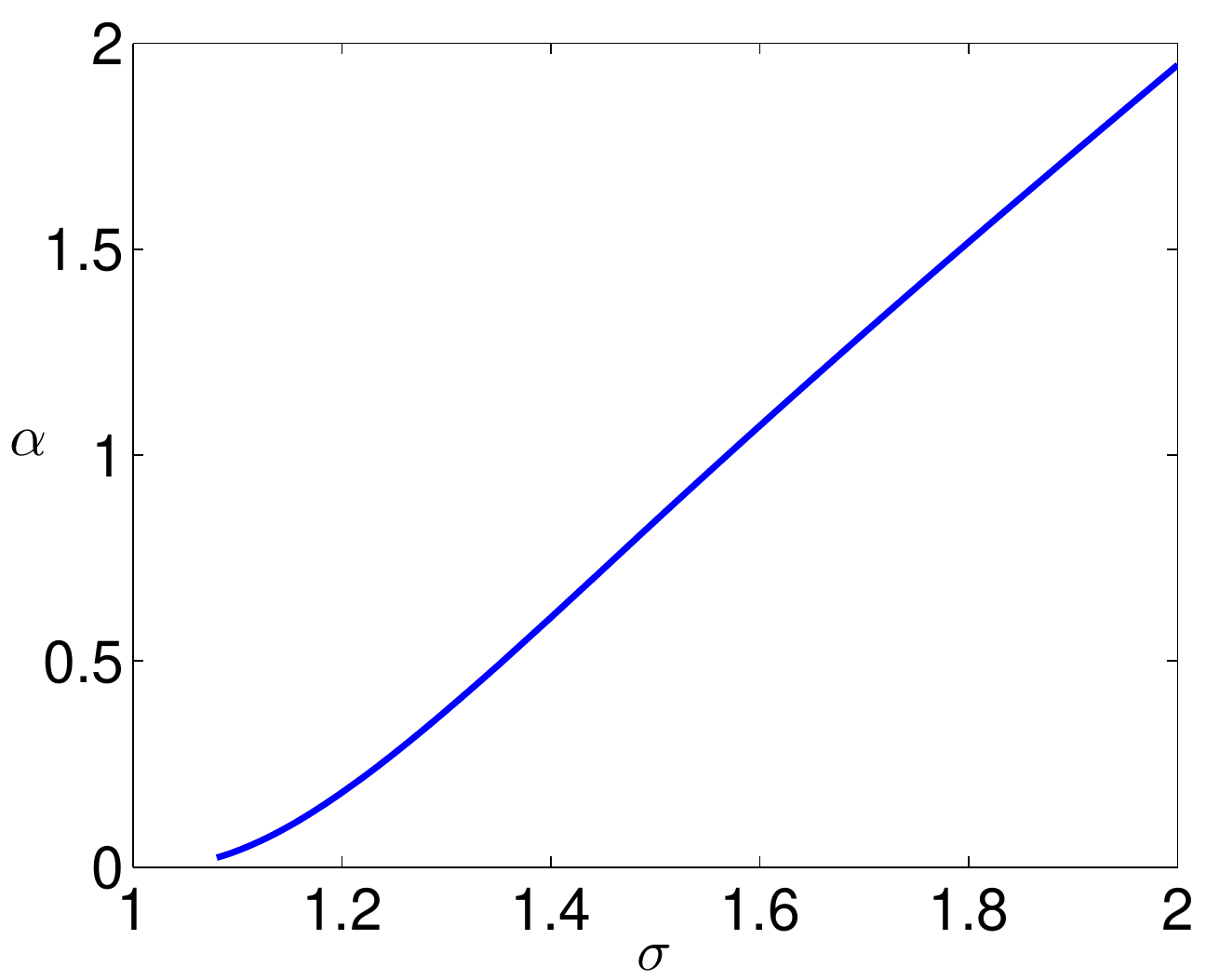}
    \includegraphics[width=0.45\linewidth]{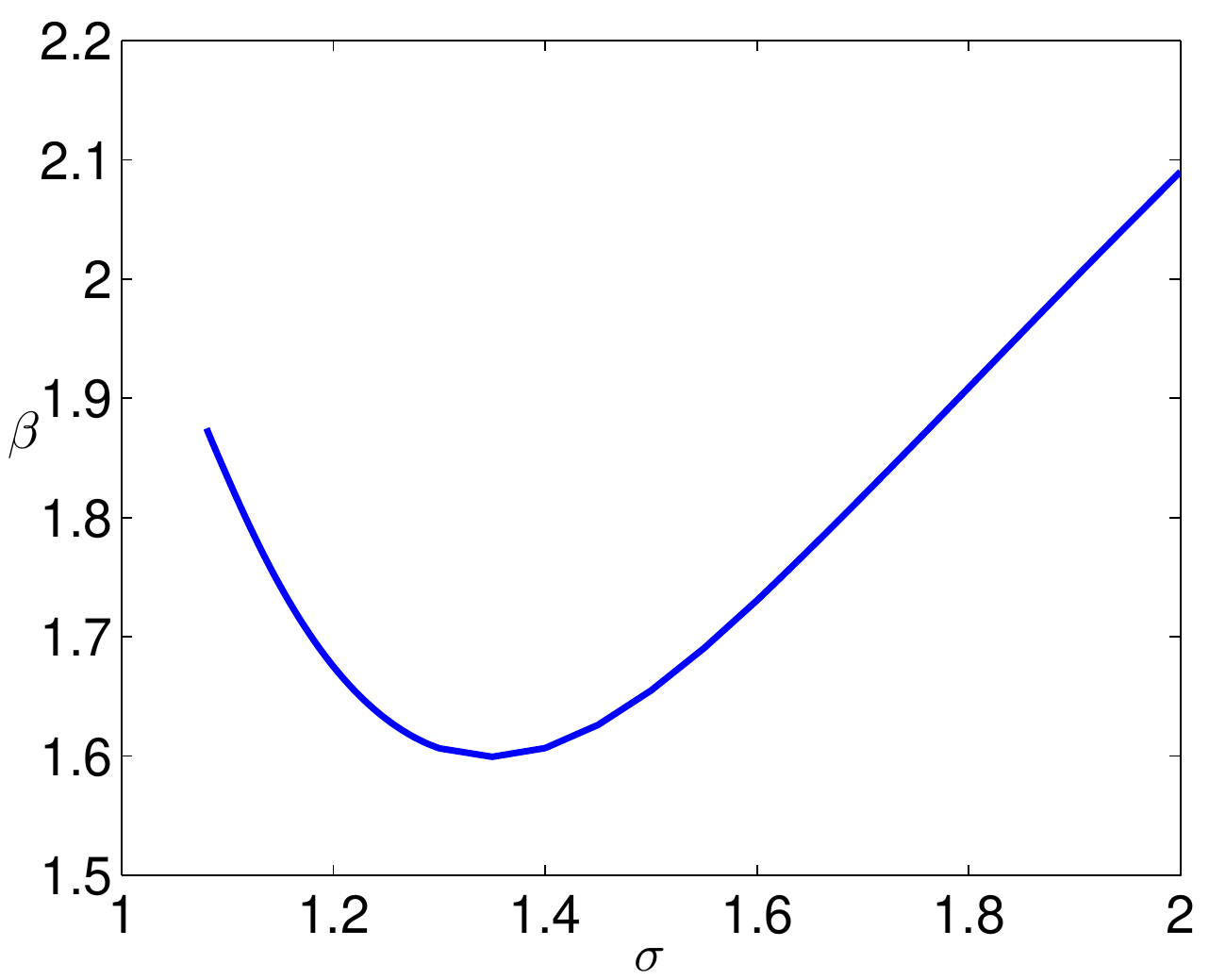}
    \caption{Coefficients $\alpha$ (left) and $\beta$ (right) versus
      $\sigma$.}
    \label{fig:bvp_a_b}
  \end{center}
\end{figure}

\section{Discussion}
\label{s:discussion}

We have numerically solved a derivative NLS equation with a general
power nonlinearity and found evidence of a finite time singularity.
We have determined that there is a square root blowup rate for the
scaling factor $L(t)$.  This implies that Sobolev norms should grow as
\[
\|\partial_x^s\psi(x,t)\|_{L^2} \sim (
t^*-t)^{\frac{\sigma-1}{4\sigma} -\frac{s}{2}}.
\]

As with supercritical NLS, this equation has a universal blowup
profile of the form \eqref{e:universal_blowup}.  Indeed, $Q$, $\alpha$
and $\beta$ are solutions of a nonlinear eigenvalue problem, and they
do not depend on the initial conditions. They depend on the power
nonlinearity $\sigma$.  Another similarity to supercritical NLS is
that the blowup profile, $Q$, is not in $L^2$ and has zero energy.
Note that there are other types of singular solutions to supercritical
NLS that were found recently \cite{FGW07} \cite{HR07}.

To conclude, we present numerical simulations of the DNLS equation
with $\sigma =1$ and several Gaussian initial conditions ($A_0 =
3,4,5,6$).  In all our simulations, the solution separates into
several waves and disperses. None of our simulations have shown any
evidence of a finite time singularity, although the transient dynamics
can be quite violent.  Figure \ref{fig:A=3_water_fall_sigma=1} shows
the evolution of $|\psi(x,t)|$ with the Gaussian initial condition:
$\psi_0(x) = 6 e^{-2 x^2}$.  The maximum value of $|\psi|$ increases
slowly in time and then stabilizes.  We also integrated DNLS using the
dynamic rescaling method.  We found that $a(\tau)$ rapidly tends to
zero and that the scaling factor $L(t)$ has a lower bound away from
zero, (Figure \ref{fig:sigma=1_a}).  This is different with the
critical NLS, which has a blowup rate at $\{ \ln \ln [(t^*
-t)^{-1}]/(t^*-t)\}^{1/2} $, due to the slow decay rate of $a(\tau)$,
\cite{Landman1988}.
\begin{figure}[h]
  \begin{center}
    \includegraphics[width=0.7\linewidth]{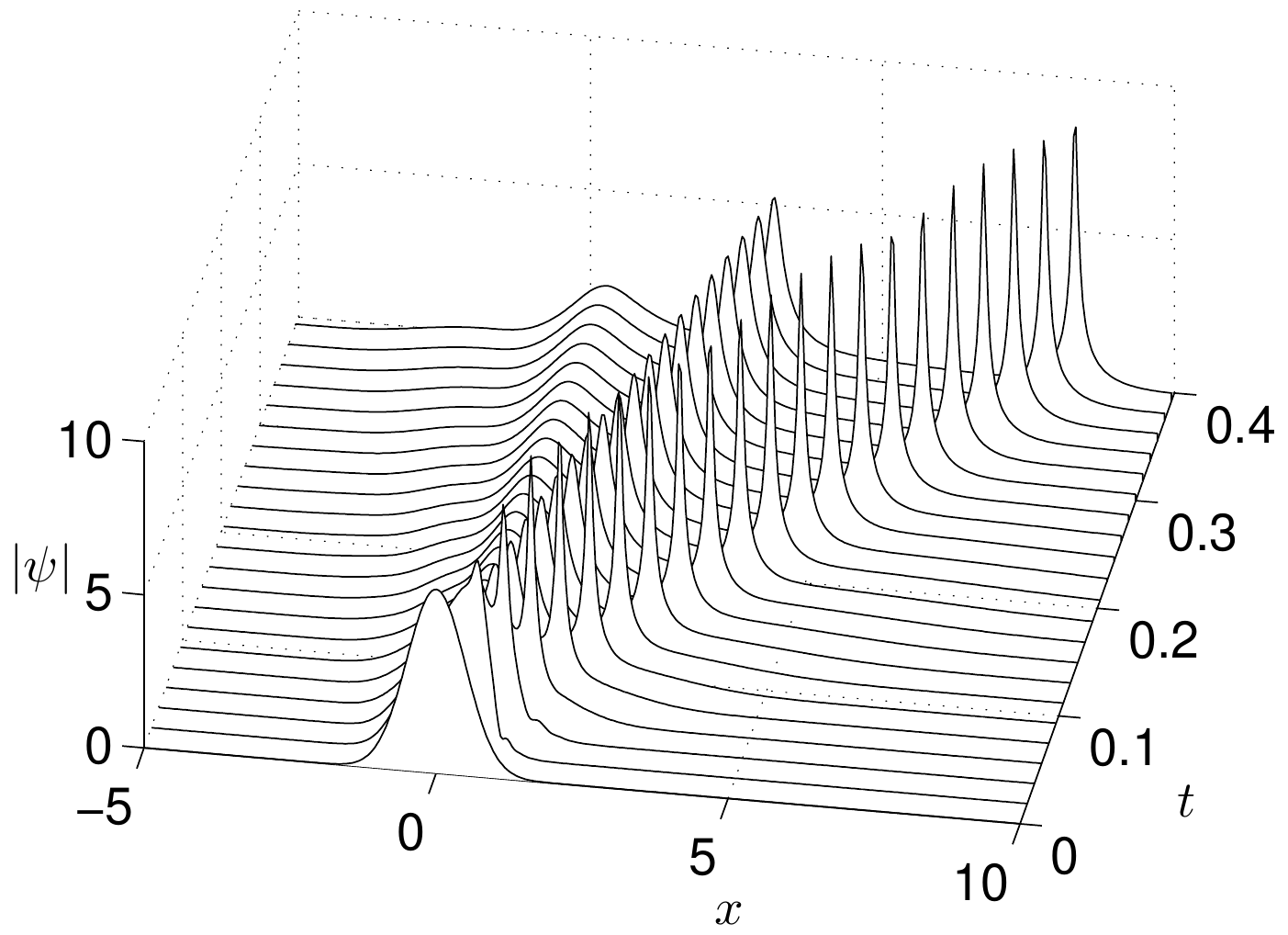}
    \caption{Time evolution of $|\psi(x,t)|$ for $\sigma =1 $.}
    \label{fig:A=3_water_fall_sigma=1}
  \end{center}
\end{figure}
\begin{figure}[h]
  \begin{center}
    \includegraphics[width=0.45\linewidth]{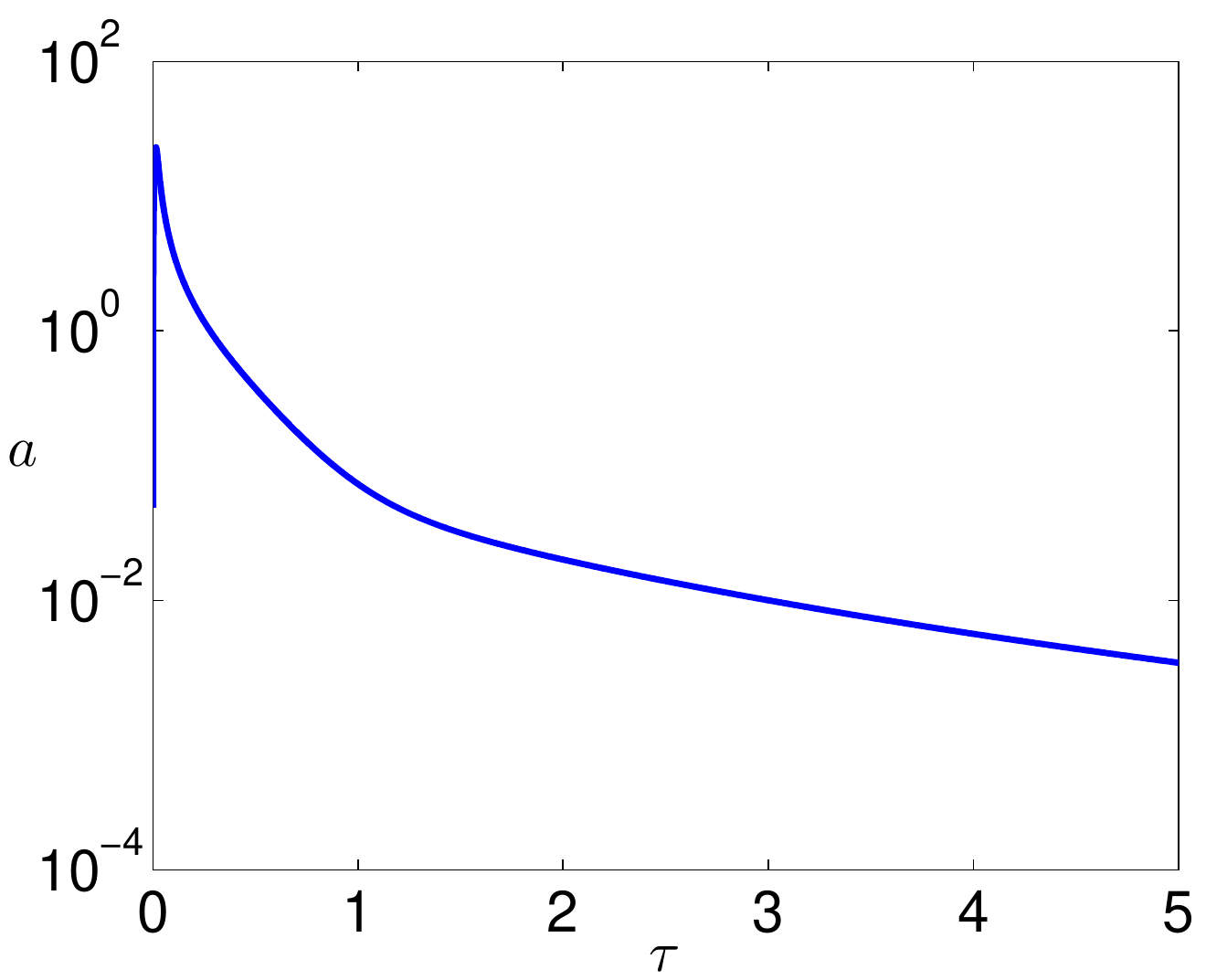}
    \includegraphics[width=0.45\linewidth]{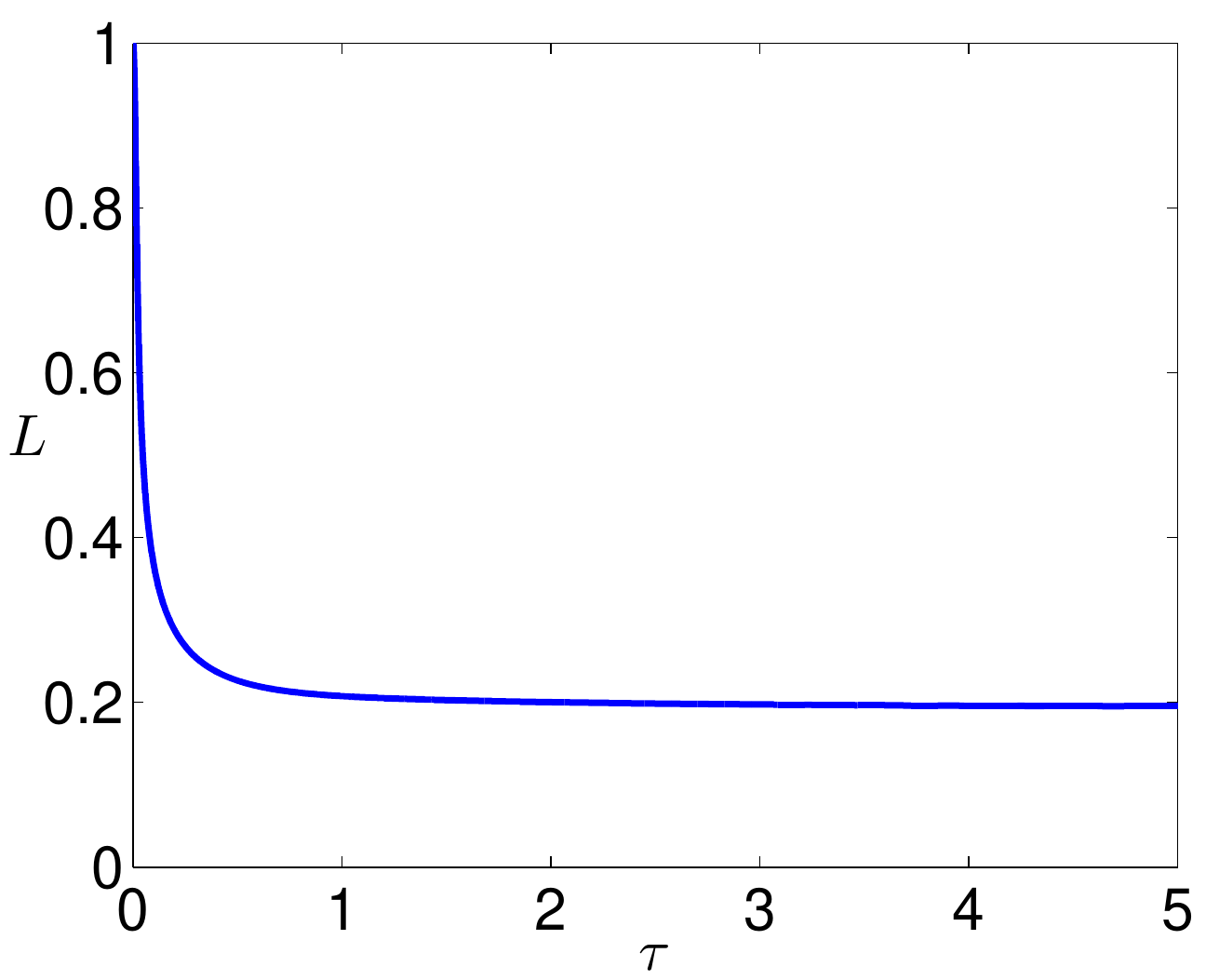}
    \caption{Time evolution of $a$ (Left) and $L$ (Right) for $\sigma
      =1$.}
    \label{fig:sigma=1_a}
  \end{center}
\end{figure}

\appendix
\section{Numerical Solution of the Boundary Value Problem}
\label{sec:BVP}

To integrate the nonlinear elliptic equation \eqref{e:blowup_soln} for
$Q$, we rewrite as a first order system for the four unknowns,
$\Re(Q), \Im(Q), \Re(Q_\xi)$ and $\Im(Q_\xi)$. As discussed before, we
solve the system in two adjacent domains $[-A_L,0]$ and $[0, A_R]$.
The solutions and their first order derivatives are matched by
continuity at $\xi=0$, while the boundary conditions at infinity are
of Robin type.  We denote
$$y_{1}^{\pm} = \Re(Q),  \, \, y_{2}^{\pm} = \Im(Q),\,\, y_{3}^{\pm} = \Re(Q_\xi), \,\; y_{4}^{\pm} = \Im(Q_\xi),$$
where $y^{-}$ and $y^{+}$ are the solutions in $[-A_L,0]$ and
$[0,A_R]$ respectively.  The system takes the form:
\begin{equation}
  \begin{split}
    \frac{d y_{1}^{\pm}}{d \xi}& = y_{3}^{\pm},\\
    \frac{d y_{2}^{\pm}}{d \xi}& = y_{4}^{\pm},\\
    \frac{d y_{3}^{\pm}}{d \xi}& = y_{1}^{\pm} +
    \alpha(\frac{1}{2\sigma} y_{2}^{\pm} +\xi y_{4}^{\pm})
    - \beta y_{4}^{\pm} +\left[(y_{1}^{\pm})^2 + (y_{2}^{\pm})^2\right]^\sigma y_{4}^{\pm},\\
    \frac{d y_{4}^{\pm}}{d \xi}& = y_{2}^{\pm} -
    \alpha(\frac{1}{2\sigma} y_{1}^{\pm} +\xi y_{3}^{\pm}) - \beta
    y_{3}^{\pm} -\left[(y_{1}^{\pm})^2 + (y_{2}^{\pm})^2\right]^\sigma
    y_{3}^{\pm}.
  \end{split}
\end{equation}
Solving four first order ODEs in two regions with two unknown
parameters requires imposing ten boundary conditions. Four of them are
the Robin boundary condition \eqref{BC-Q1} relating $\Re(Q), \Im(Q),
\Re(Q_\xi)$ and $\Im(Q_\xi)$ at $\xi = -A_L$ and $\xi = A_R$
respectively:
\begin{equation}
  \begin{split}
    -y_{1}^{-} - \frac{\alpha}{2\sigma} y_{2}^{-} - \alpha \xi y_{4}^{-} &= 0 \mbox{ at } \xi=-A_L,\\
    -y_{2}^{-} + \frac{\alpha}{2\sigma} y_{1}^{-} + \alpha \xi y_{3}^{-} &= 0\mbox{ at } \xi=-A_L,\\
    -y_{1}^{+} - \frac{\alpha}{2\sigma} y_{2}^{+} - \alpha \xi y_{4}^{+} &= 0 \mbox{ at } \xi=A_R,\\
    -y_{2}^{+} + \frac{\alpha}{2\sigma} y_{1}^{+} + \alpha \xi
    y_{3}^{+} &= 0\mbox{ at } \xi=A_R.
  \end{split}
\end{equation}
We impose the continuity of the solution at $\xi =0$:
\begin{equation}
  \begin{split}
    y_{1}^{+}(0) &= y_{1}^{-}(0),\\
    y_{2}^{+}(0) &= y_{2}^{-}(0),\\
    y_{3}^{+}(0) &= y_{3}^{-}(0),\\
    y_{4}^{+}(0) &= y_{4}^{-}(0).
  \end{split}
\end{equation}
The other two conditions are $|Q(0)|_\xi =0$ (the maximum value of $Q$
is attained at the origin) and $\Im(Q(0))=0$ (because the equation is
invariant by phase translation) :
\begin{equation}
  \begin{split}
    y_{1}^{-}y_{3}^{-} +y_{2}^{-}y_{4}^{-} &=0 \mbox{ at } \xi=0,\\
    y_{2}^{-} &=0 \mbox{ at } \xi=0
  \end{split}
\end{equation}

We proceed using a continuation method with respect to $\sigma$,
starting from $\sigma=2$ down to $\sigma=1.08$.  The MATLAB nonlinear
solver {\tt bvp4c} is used to integrate the system for each $\sigma$.
The two domains are automatically handled using the multipoint
feature, which permits for matching conditions at 0.  One needs to
provide a well-chosen initial guess.  We use the final
profile 
from our time-dependent simulation with the initial condition
\eqref{gauss}, $A_0 =3$ and $\sigma=2$, in the domain $[-10,10]$ to
extract the maximum bulk of the solution, (see Figures \ref{fig:Q} and
\ref{fig:Arg_Q}).  This solution is then extended to a larger domain
$[-150, 50]$ with an increment of 10. As shown in Figure
\ref{fig:continuation_ReQ_ImQ}, more and more oscillations occur as
$\sigma$ approaches $1$. The calculation becomes very delicate and we
use smaller and smaller increments of $\sigma$, namely $0.05, 0.01$
and $0.005$ for $\sigma$ in the intervals $2\geq\sigma\geq1.3,
1.3\geq\sigma\geq1.2$ and $1.2\geq\sigma\geq1.08$, respectively. We
set the maximum mesh points at $5\times10^5$ and the tolerance of the
difference between two iterations at $10^{-6}$.

\end{document}